\newtheorem{theorem}{Theorem}[section]
\newtheorem{lemma}[theorem]{Lemma}
\newtheorem{proposition}[theorem]{Proposition}
\newtheorem{corollary}[theorem]{Corollary}
\newtheorem{claim}{Claim}
\newtheorem{definition}[theorem]{Definition}
\newtheorem{notation}[theorem]{Notation}
\newtheorem{assumption}[theorem]{Assumption}
\newtheorem*{theorem:a}{Theorem \ref{thm:a}}
\newtheorem{remark}[theorem]{Remark}
\newcommand{\T}{\mathcal T}
\newcommand{\M}{\mathcal M}
\newcommand{\A}{\mathcal A}
\newcommand{\N}{\mathbb N}
\newcommand{\B}{\mathfrak B_{\text{mT}}}
\newcommand{\bb}{\mathscr B_{mT}}
\newcommand{\mT}{T}
\newcommand{\ah}{\mathfrak X_{\text{AH}}}
\newcommand{\am}{\mathfrak X_{\text{nr}}}
\newcommand{\mps}{\mathfrak X_{\text{Kus}}}
\newcommand{\gpz}{\mathfrak X_{p}}
\newcommand{\Li}{\mathscr L_\infty}
\DeclareMathOperator\w{w}
\DeclareMathOperator\age{age}
\DeclareMathOperator\mt{mt}
\DeclareMathOperator\bdp{bd}
\DeclareMathOperator\rank{rank}
\DeclareMathOperator\rng{rng}
\DeclareMathOperator\supp{supp}
\DeclareMathOperator\len{len}
\title[Regular subspaces of the space $\B$]{Regular subspaces of a Bourgain-Delbaen space $\bb$}
\author{Micha{\l} \'Swi\c{e}tek}
\begin{document}

\maketitle
\begin{abstract}
    The space $\bb[(m_j)_j,(n_j)_j]$ is a Bourgain-Delbaen space modelled on a mixed Tsirelson space $\mT[(m_j)_j,(n_j)_j]$
    and is a slight modification of $\B[(m_j)_j,(n_j)_j]$, a space defined by S. Argyros and R. Haydon.
    We prove that in every infinite dimensional subspace of $\bb[(m_j)_j,(n_j)_j]$ there exists a basic sequence equivalent to
    a sequence of weighted basis averages of increasing length from $\mT[(m_j)_j,(n_j)_j]$. 
    We remark that the same is true for the original space $\B[(m_j)_j,(n_j)_j]$.
\end{abstract}

\section{Introduction}

In 1980, Bourgain and Delbaen \cite{BD} discovered a new general scheme of constructing separable $\Li$-spaces, i.e.
spaces of the form $\overline{\bigcup_{n\in\N} F_n}$, where $F_n \subset F_{n+1}$ and $F_n$ is $C$-isomorphic to $\ell_\infty^{k_n}$,
for every $n\in\N$, some uniform constant $C$, and some sequence $(k_n)_n\subset\N$.
Using the scheme they constructed two classes of new isomorphic preduals to $\ell_1$,
which answered many open problems.
Among others, they positively solved the problem of existence of a predual of $\ell_1$ not containing $c_0$.
The novelty of their method relied on usage of isomorphic copies of finite dimensional $\ell_\infty^n$ spaces instead of isometric ones.
A space constructed by this scheme is nowadays called a Bourgain-Delbaen space.

During last 10 years, the Bourgain-Delbaen scheme attracted attention of many researchers who used it to construct new spaces or prove general theorems.
To mention only a few:
in \cite{AH} the authors construct a hereditarily indecomposable Banach space $\ah$ with the scalar-plus-compact property,
in \cite{FOS} the authors prove the universality of $\ell_1$ as a dual Banach space, and
in \cite{AFHO} the authors prove that every separable uniformly convex space can be embedded into a Banach space with the scalar-plus-compact property.
Remarkably, the scheme turned out to be the most general way of constructing separable $\Li$-spaces, as
every separable $\Li$-space is isomorphic to a Bourgain-Delbaen space.
It was proved in \cite{AGM}.

The ambient space for the construction of $\ah$ was the space $\B=\B[(m_j)_j,(n_j)_j]$, for some fixed sequences $(m_j)_j,(n_j)_j$ of natural numbers.
It is a Bourgain-Delbaen space, which is modelled on a mixed Tsirelson space $\mT[(m_j)_j,(n_j)_j]$.
Consequently, it is unconditionally saturated and does not contain $c_0$ nor $\ell_p,\ p\in[1,\infty)$.
Lately, the space $\B$ was used in \cite{AM} and in \cite{MPS}.
In \cite{AM} the authors construct an example of a hereditarily indecomposable $\Li$-space $\am$ such that
it does not contain $c_0$, $\ell_1$, or reflexive subspaces and has the scalar-plus-compact property.
In \cite{MPS} the authors construct an example of an $\Li$-space $\mps$ with the scalar-plus-compact property, but
with the opposite structure inside the space to the structure of the spaces $\ah$ and $\am$, namely, the space $\mps$ is unconditionally saturated, whereas the spaces $\ah$ and $\am$ are hereditarily indecomposable.
Both spaces $\am$ and $\mps$ are quotients of $\B$, constructed using the self-determined-sets technique introduced in \cite{AM}.

R. Haydon proved \cite{H} that one of the first examples constructed by Bourgain and Delbaen within their scheme,
the space $X_{a,b}$ for $0<b<1/2<a<1$ and  $a+b>1$, is saturated with $\ell_p$, where $p\in(1,\infty)$ is determined by equations $1/p+1/q=1$ and $a^q+b^q=1$.
In \cite{GPZ} the authors introduce spaces $\gpz$ for $p\in(1,\infty)$ modelled on the Tsirelson space $T(\A_n,\overline b)$,
for some fixed $n>1$, and a sequence $\overline b=(b_1,\dots,b_n)$ of positive real numbers satisfying $b_1<1,\ b_2,\dots,b_n<1/2$ and
$\sum_{i=1}^n b^q=1$.
The space $T(\A_n,\overline b)$ is isomorphic to $\ell_p$ and the space $\gpz$ is saturated with $\ell_p$.
Moreover, the authors notice that for $n=2$ their definition of the space $\gpz$ essentially coincide with $X_{b_1,b_2}$.

Continuing this line of research we prove the following theorem (see Section 4 for more precise formulation)

\begin{theorem:a}
The space $\bb[(m_j)_j,(n_j)_j]$ is saturated with sequences of weighted basis averages of increasing length from the mixed Tsirelson space $\mT[(m_j)_j,(n_j)_j]$.
\end{theorem:a}

The space $\bb[(m_j)_j,(n_j)_j]$ is a modification of $\B[(m_j)_j,(n_j)_j]$.
The modification is technical, and was made in order to slightly minimise the notational complexity.
Nevertheless, we make comments in the paper showing that Theorem \ref{thm:a} is true for $\B[(m_j)_j,(n_j)_j]$ as well.

The paper is organized as follows.
In Section 2 we recall basic facts and definitions, i.e. the mixed Tsirelson spaces, the space $\B$, different types of analyses of nodes,
and rapidly increasing sequences.
Section 3 contains lemmas used in the proof of Main Theorem. 
Lemmas \ref{split} and \ref{comp} may be of independent interest.
Finally, in Section 4 we give a proof of Theorem \ref{thm:a}.

This work is a part of author's Ph.D. thesis, which was written under the direction of
Anna Pelczar-Barwacz. 
The author would like to thank her for her kind introduction to the
field, valuable discussions and insightful remarks.

\section{Basic Facts and Definitions.}

\subsection{The mixed Tsirelson spaces.} 
They originated in \cite{AD} and generalise the original Tsirelson
space \cite{T} and the Schlumprecht space \cite{S} (see Remark \ref{mtex}).
\begin{definition}
Let $(\M_n)_{n\in\N}$ be a sequence of compact subsets of $\mathcal P(\N)$, $(\theta_n)_{n\in\N}$ be a sequence of positive reals, and 
$W=W[(\M_n,\theta_n)_n]$ be the smallest subset of $c_{00}$ satisfying:
\begin{enumerate}
    \item $\pm e^*_i \in W$ for all $i\in \N$,
    \item for all $n,d\in \N$, $f_1<\cdots<f_d\in W[(\M_n,\theta_n)_n]$
        if $(\min\supp f_i)_{i=1}^d\in \M_n$, then the combination $\theta_n\sum_{i\leq d}f_i$ is in $W[(\M_n,\theta_n)_n]$.
\end{enumerate}
Define $\mT[(\M_n,\theta_n)_n]$ to be the completion of $(c_{00}, \|\cdot\|_{W[(\M_n,\theta_n)_n]})$, where
$\|x\|_{W[(\M_n,\theta_n)_n]} = \sup \{ f(x) \mid f\in W[(\M_n,\theta_n)_n]\}$,
for all $x\in c_{00}$.
\end{definition}

\begin{remark}\label{mtex}
Define $\A_n = \{F\subset \N\mid \# F = n\}$, for all $n\in\N$.
\begin{enumerate}
\item If $\M_n=\M$, $\theta_n=\theta$ for every $n\in\N$ and some $\M, \theta$, then
    \begin{itemize}
        \item if $i(\M)<\omega$ and $\frac{1}{i(\M)}\geq \theta$, then $\mT[(\M_n,\theta_n)_n]\cong c_0$,
        \item if $i(\M)<\omega$ and $\frac{1}{i(\M)}< \theta$, then $\mT[(\M_n,\theta_n)_n]\cong \ell_p$ for some $p\in (1,\infty)$,
    \end{itemize}
    \item If $\M_n = \mathcal S=\{\,F\subset \N\mid \#F\leq \min F\,\}$ and $\theta_n = \frac{1}{2}$, for every $n\in\N$, then $\mT[(\M_n,\theta_n)_n]$ is the Tsirelson space,
    \item if $\M_n = \A_n$ and $\theta_n = \frac{1}{\log(n+1)}$, for every $n\in\N$, then $\mT[(\M_n,\theta_n)_n]$ is the Schlumprecht space. 
\end{enumerate}
\end{remark}

The following theorem describes basic properties of the mixed Tsirelson spaces needed in the sequel.
\begin{theorem}[Theorem I.10 \cite{AT}]\label{basic-mt}
The standard vectors $(e_n)_{n\in\N}\subset c_{00}$ are $1$-unconditional basis for $\mT[(\M_n,\theta_n)_n]$ and
if for some $n\in\N$ it holds that the Cantor–Bendixson index $i(\M_n)\geq\omega$ or $i(\M_n)=r<\omega$ and $\theta_n > \frac{1}{r}$, then the space $\mT[(\M_n,\theta_n)_n]$ is reflexive.

Moreover, if the first alternative holds or the second alternative holds for some increasing sequence $(r_{n_k})_{k\in\N}\subset \N$, 
then the space $\mT[(\M_n,\theta_n)_n]$ does not contain any of the spaces $c_0,\ \ell_p$, for $p\in [1,\infty)$.
\end{theorem}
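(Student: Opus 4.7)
The statement has three claims, and my proof plan addresses each in turn.

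For the $1$-unconditionality of $(e_n)$, I would argue by induction on the inductive construction of the norming set $W=W[(\M_n,\theta_n)_n]$, proving that $W$ is closed under the operation $f \mapsto \sum_i \epsilon_i f(e_i) e_i^\ast$ for any choice of signs $\epsilon_i \in \{-1,+1\}$. The base case is immediate from $\pm e_i^\ast \in W$. In the inductive step, if $f = \theta_n \sum_{i \leq d} f_i$ with $f_1 < \cdots < f_d$ and $(\min \supp f_i)_{i \leq d} \in \M_n$, then applying the inductive hypothesis to each $f_i$ produces sign-flipped $g_i \in W$ with identical supports and ordering, so $\theta_n \sum_i g_i \in W$. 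Taking suprema over $W$ yields $\|\sum_i \epsilon_i a_i e_i\| = \|\sum_i a_i e_i\|$.

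For reflexivity, I would invoke the James-type criterion for unconditional bases: reflexivity is equivalent to the basis being both shrinking and boundedly complete, which in turn is equivalent to the space admitting no block-subspace isomorphic to $\ell_1$ or $c_0$. The quantitative input is that, under the stated hypotheses, one can always exhibit norming functionals $\theta_n \sum_{i=1}^d e_{k_i}^\ast$ with $(k_i) \in \M_n$ of arbitrarily large size (when $i(\M_n) \geq \omega$) or of size exactly $r$ with $\theta_n > 1/r$ (otherwise). These provide a strict lower estimate on averages of basis-like vectors, ruling out $c_0$-block sequences; dually, combining the same lower estimates with an averaging argument on the tree structure of $W$ gives upper bounds incompatible with any $\ell_1$-block sequence.

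For the third, non-containment claim, the argument is subtler and relies on rapidly increasing sequences together with the basic inequality, the upper tree estimate developed in \cite{AT}. Given a normalized block sequence $(x_k)$, my plan is to extract a RIS subsequence and then, for every putative $p \in [1,\infty)$, compare upper $\ell_p$-type estimates coming from the analysis of functionals in $W$ against the lower estimates produced by the averaging improvement $\theta_{n_k}$ on windows whose size is governed by $\M_{n_k}$. The assumption that $i(\M_n) \geq \omega$ for some $n$, or that the sequence $r_{n_k}$ is unbounded, is precisely what makes this iteration unbounded, so that the effective exponent can be driven below any fixed $p$ and simultaneously away from $\infty$, forcing an inconsistency with both $\ell_p$ and $c_0$ norms. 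The main obstacle I anticipate is the technical bookkeeping behind the basic inequality: one must align the choice of RIS indices, the increasing sequence of layers $n_k$, and the \emph{analysis} of each functional so that the asymptotic gain from $\theta_{n_k}$ propagates through nested estimates in the mixed-layer norming set, which is what makes the argument genuinely more delicate than its classical Tsirelson prototype.
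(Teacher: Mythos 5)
The paper does not prove this statement: it is cited as Theorem I.10 of \cite{AT} and used as a black box, so there is no proof here to compare yours against. On its own terms your unconditionality argument is correct and complete: $W[(\M_n,\theta_n)_n]$ is closed under $f \mapsto \sum_i \varepsilon_i f(e_i)e_i^*$ by the very induction that builds it, and this closure gives $1$-unconditionality immediately.

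The reflexivity and non-containment parts correctly name the right machinery from \cite{AT} (James's criterion for unconditional bases, repeated averages and rapidly increasing sequences, the basic inequality), but in both cases you defer the one genuinely hard step. The lower estimate you describe does rule out $c_0$, modulo a spreading/regularity assumption on the families $\M_n$ that your iteration uses tacitly; this assumption is part of the setup in \cite{AT} even though it is suppressed both in your sketch and in the paper's restatement, so you should at least flag it. What you do not supply, for either the reflexivity claim or the ``moreover'' clause, is the matching \emph{upper} estimate against $\ell_1$ and against each $\ell_p$: how a tree analysis of an arbitrary $f \in W$, tested against a block sequence that has first been preprocessed into repeated averages or a RIS, produces an upper bound quantitatively controlled by the $\theta_n$'s and the Cantor--Bendixson indices of the $\M_n$, and how unboundedness of $(r_{n_k})$ in the last clause is precisely what pushes those upper bounds below any fixed $\ell_p$ lower estimate (whereas under the weaker ``for some $n$'' hypothesis the space may genuinely be $\ell_p$, cf.\ Remark~\ref{mtex}). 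That upper estimate is where the content of the theorem lives; as written, your proposal is a sensible plan that points at the basic inequality but does not reconstruct the step where it actually does the work.
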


A tree analysis of a functional from $W$ is the main tool for bounding the norm of a vector in a mixed Tsirelson space.
\begin{definition}\label{tree-analysis-mt}
Let $f\in W[(\M_n,\theta_n)_n]$, let $\T$ be a tree with the root $\emptyset$, and let $S_t$ denote the set of all successors of $t$ in $\T$, for all $t\in \T$. 
We call a sequence $(f_t)_{t\in\T}$ a tree analysis of $f$ if
\begin{itemize}
    \item $f_\emptyset=f$,
    \item if $t\in \T$ is a terminal node, then $f=\pm e^*_k$ for some $k\in\N$,
    \item if $t\in \T$ is a non-terminal node, then $f_t=\theta_n\sum_{s\in S_t} f_s$ for some $n\in\N$ and some block sequence  $(f_s)_{s\in S_t} \subset W[(\M_n,\theta_n)_n]$ with $(\min\supp f_s)_{s\in S_t} \in \M_n$.
\end{itemize}
\end{definition}

As a consequence of the minimality of $W$ we obtain
\begin{proposition}
    Every $f\in W[(\M_n,\theta_n)_n]$ admits a tree analysis.
\end{proposition}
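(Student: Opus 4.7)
The plan is to prove this by invoking the minimality of $W=W[(\M_n,\theta_n)_n]$ directly. I would let $W'\subseteq W$ denote the set of functionals that admit a tree analysis in the sense of Definition \ref{tree-analysis-mt}, and then show that $W'$ satisfies conditions (1) and (2) of the definition of $W$. Since $W$ is the smallest such set, this will force $W'=W$, which is exactly the desired conclusion.

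For condition (1), given $\pm e_k^*$, take $\T=\{\emptyset\}$ (a tree consisting only of the root, which is therefore terminal) and set $f_\emptyset = \pm e_k^*$. This is a legitimate tree analysis, so $\pm e_k^* \in W'$ for every $k\in\N$.

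For condition (2), suppose $f_1<\cdots<f_d$ lie in $W'$, with $(\min\supp f_i)_{i=1}^d\in\M_n$, and let $(f_{i,t})_{t\in\T_i}$ be a tree analysis of $f_i$ for each $i\leq d$. I would then construct a tree $\T$ by taking a fresh root $\emptyset$ whose immediate successors are the roots of the trees $\T_1,\ldots,\T_d$ (after a disjoint relabelling), and setting $f_\emptyset = \theta_n\sum_{i\leq d} f_i$ together with $f_t = f_{i,t}$ for $t\in\T_i$. The verification that this is a tree analysis is immediate: the root is non-terminal of the correct form, and every other node inherits its property from one of the $\T_i$. Hence $\theta_n\sum_{i\leq d} f_i \in W'$.

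There is no real obstacle here; the argument is essentially a bookkeeping observation that the tree-analysis property is preserved under the two generating operations defining $W$. The only mild subtlety is to make sure the disjoint union of the subtrees $\T_i$ is genuinely a tree with a well-defined root and successor structure, which is a purely combinatorial relabelling.
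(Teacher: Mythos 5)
Your proof is correct and follows precisely the route the paper intends: the paper introduces the proposition with the phrase ``As a consequence of the minimality of $W$ we obtain,'' and your argument---showing that the set of functionals admitting a tree analysis satisfies the two closure conditions defining $W$, then invoking minimality---is exactly that consequence spelled out. No gaps.
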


For the rest of this paper we fix sequences $(m_j)_{j \in \N}$, $(n_j)_{j \in \N}$ of natural numbers.
We need certain growth conditions for them.
\begin{assumption}
We assume that
\begin{enumerate} 
    \item $m_1 = n_1 = 4$,
    \item $m_{j+1} \geq m_j^2$,
    \item $n_{j+1} \geq m_{j+1}^2(4n_j)^{\log_2 m_{j+1}}$.
\end{enumerate}
\end{assumption}

\begin{notation}
    We will write $\mT=\mT[(m_j)_j,(n_j)_j]=\mT[(\A_{n_j},m_j^{-1})_j]$.
\end{notation}
By Theorem \ref{basic-mt} the space $\mT$ is a reflexive space and the standard vectors $(e_n)_{n\in\N}\subset c_{00}$ are an unconditional basis for it.

\begin{proposition}[Lemma II.9 \cite{AT}] \label{baver-norm}
Let $(z_i)_{i=1}^{n_j}$ be a subsequence of the standard basis $(e_n)_{n\in\N}$ in $\mT$.
Then 
\[
    \|\frac{m_j}{n_j}\sum_{i=1}^{n_j} z_i\| = 1.
\]
\end{proposition}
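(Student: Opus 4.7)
The plan is to prove $\|x\|\geq 1$ by exhibiting an explicit norming functional, and $\|x\|\leq 1$ by a tree-analysis argument on an arbitrary $f\in W$, split into cases by the weight at the root of the tree.

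For the lower bound, set $l_i:=\min\supp z_i$ for $i=1,\dots,n_j$. Since $(z_i)$ is a block subsequence of the basis, $l_1<\cdots<l_{n_j}$ and so $\{l_i\}_{i=1}^{n_j}\in\A_{n_j}$. By clauses (1) and (2) of the definition of $W$, the functional $\phi:=\frac{1}{m_j}\sum_{i=1}^{n_j}e^*_{l_i}$ lies in $W$, and evaluating gives $\phi(x)=1$, establishing $\|x\|\geq 1$.

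For the upper bound, fix $f\in W$. By $1$-unconditionality of the basis (Theorem~\ref{basic-mt}) I may assume all coefficients of $f$ are nonnegative, and I fix a tree analysis $(f_t)_{t\in\T}$. Writing the root decomposition as $f=\frac{1}{m_n}\sum_{s\in S_\emptyset}f_s$ with $(f_s)$ block and $(\min\supp f_s)\in\A_{n_n}$, I would split on whether $n\geq j$ or $n<j$. In the easy regime $n\geq j$, block-disjointness of $(\supp f_s)$ combined with $\|f_s\|_\infty\leq 1$ forces $\sum_s f_s(x)\leq n_j\cdot(m_j/n_j)=m_j$, so $f(x)\leq m_j/m_n\leq 1$, and I am done without any induction.

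The technical heart of the proof is the remaining regime $n<j$, where $m_j/m_n>1$ makes the direct estimate fail, and the trivial inductive bound $f_s(x)\leq 1$ yields only $f(x)\leq n_n/m_n$, which is enormous under the growth hypothesis. To close the argument one strengthens the inductive hypothesis to track $f_s$ applied to the partial basic average $\frac{m_j}{n_j}\sum_{k\in K_s}z_k$ on the set $K_s:=\{k:l_k\in\supp f_s\}$; the $K_s$ are pairwise disjoint by the block condition and their total size is bounded by $n_j$. Iterating the decomposition through the tree levels whose weight is smaller than $1/m_j$, one interpolates between a trivial count bound of shape $|K_s|m_j/n_j$ and the weight-driven bound from the easy regime, and the estimate telescopes down the tree. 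The main obstacle, and the reason the third growth condition takes its specific shape $n_{j+1}\geq m_{j+1}^2(4n_j)^{\log_2 m_{j+1}}$, is precisely that one needs the number of children available at each level to dominate the accumulated factors of reciprocal weights over the $O(\log m_j)$ levels where the iteration runs; getting this bookkeeping to close at exactly $1$ is where all the work lies.
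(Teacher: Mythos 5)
The paper does not prove this proposition; it cites it verbatim as Lemma~II.9 of Argyros--Todorcevic, so there is no internal proof to compare your attempt against. Judged on its own terms, your argument is correct and complete only in part.

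Your lower bound is fine: $\phi=\frac{1}{m_j}\sum_{i=1}^{n_j}e^*_{l_i}$ is a legal combination in $W$ because $\{l_1,\dots,l_{n_j}\}$ has exactly $n_j$ elements, and $\phi(x)=1$. Your upper bound in the regime $n\geq j$ is also fine: the children of the root have disjoint supports, each $z_i$ lies in at most one of them, and $\|f_s\|_\infty\leq 1$, so $\sum_s|f_s(x)|\leq m_j$ and $|f(x)|\leq m_j/m_n\leq 1$.

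The case $n<j$, however, is not proved; it is a narrative plan, and the plan as sketched does not close. The difficulty is sharper than a bookkeeping issue. If you split the basis indices hit by $f$ into those whose path to the root has accumulated weight $\leq m_j^{-1}$ (the ``deep'' ones) and the rest (the ``shallow'' ones), each deep index contributes at most $\frac{1}{n_j}$ to $f(x)$, so $n_j$ of them already account for the full budget of $1$. Every shallow index both removes a deep slot and contributes strictly more than $\frac{1}{n_j}$, so the naive estimate gives $f(x)\leq 1+\frac{S}{n_j}\bigl(\frac{m_j}{m_n}-1\bigr)$ where $S$ is the shallow count; this exceeds $1$ whenever $S>0$, no matter how tiny the growth conditions make $S$. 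In other words, the ``interpolation between a trivial count bound and the weight-driven bound'' that you describe gives $\|x\|\leq 1+\varepsilon$, not $\|x\|\leq 1$. To get equality one needs a genuinely finer charging of the shallow subtree (tracking the full accumulated weight along each shallow path rather than bounding it crudely by $m_n^{-1}$, and exploiting that a node of weight $m_k^{-1}$ has at most $n_k$ \emph{total} children including the ones that go deep), which is exactly the content of the cited lemma and is not supplied here. Since you explicitly defer ``getting this bookkeeping to close at exactly $1$,'' the hard half of the statement remains unproven.
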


\subsection{The space $\bb$}

We define the space $\bb$ below.
It is a slight modification of the space $\B$.
The construction is a special case of a general scheme for constructing separable $\Li$-spaces, which was defined for the first time in \cite{BD}.
We follow the slightly modified version of the construction from \cite{AH}, where the space $\B$ itself was defined.

Fix an increasing sequence $(N_q)_{q\in\N}$ of natural numbers.
We define inductively a sequence of disjoint finite sets $(\Delta_q)_{q\in\N}$.
Let $\Delta_1=\{1\}$. 
Assume that sets $\Delta_1,\dots,\Delta_q$ have been defined.
Set $\Gamma_0 = \emptyset$, $\Gamma_p = \bigcup_{r=1}^p \Delta_r$, $p\leq q$, and

\begin{align*}
    \Delta_{q+1}  & =  \bigcup_{j=1}^{q+1} \{ (q+1, 0, m_j,  \varepsilon e^*_\eta) \mid  \varepsilon=\pm 1,\ \eta \in \Gamma_q \} \cup
 \bigcup_{1\leq p<q} \bigcup_{j=1}^{p} \{ (q+1, \xi, m_j, \varepsilon e^*_\eta) \mid  \\
& \quad\quad\quad\quad\quad\quad\xi\in \Delta_p,\   
\w(\xi)=m_j^{-1}, \age(\xi)<n_j, \varepsilon=\pm 1,\ \eta \in \Gamma_q\setminus\Gamma_p \},
\end{align*}
where $\w(q, \xi, m_j,  \varepsilon e^*_\eta) = m_j^{-1}$, $\age(q, 0, m_j,  \varepsilon e^*_\eta)=1$,
and $\age(q, \xi, m_j,  \varepsilon e^*_\eta) = \age(\xi)+1$.
For a node $\gamma=(q+1, \xi, m_j, \varepsilon e^*_\eta)$ we define $\rank(\gamma) = q+1$.

Let $\Gamma = \bigcup_{q\in\N}\Delta_q$, and for every 
$\gamma\in\Gamma$ we define
\[
    c^*_\gamma =
    \begin{cases}
        m_j^{-1}P^*_{(p,q]} \varepsilon e^*_\eta, &\text{ for } 
        \gamma=(q+1, 0, m_j, \varepsilon e^*_\eta), \\
        e^*_\xi+m_j^{-1} P^*_{(p,q]}\varepsilon e^*_\eta, &\text{ for } 
        \gamma=(q+1, \xi, m_j,  \varepsilon e^*_\eta),
    \end{cases}
\]
and $d^*_\gamma = e^*_\gamma - c^*_\gamma$, where $P^*_{(p,q]}$ is a projection onto $\langle d^*_\gamma \mid \rank(\gamma)\in(p,q]\rangle$.

\begin{remark}
In \cite{AH} authors use projections of the form $P^*_{(p,\infty)}$.
Notice that
\[
    P^*_{(p,\infty)} \restriction \ell_1(\Gamma_q) = P^*_{(p,q]} \restriction \ell_1(\Gamma_q).
\]
\end{remark}

By Theorem 3.5 \cite{AH} the sequence $(d^*_\gamma)_{\gamma\in\Gamma}$ is
a basis for $\ell_1(\Gamma)$ and we take $(d_\gamma)_{\gamma\in\Gamma}\subset \ell_\infty(\Gamma)$ to be a biorthogonal sequence to 
$(d^*_\gamma)_{\gamma \in \Gamma}$.

\begin{definition}
We define
\[
    \bb = 
    \overline{\langle d_\gamma \mid \gamma \in \Gamma \rangle}
    \subset \ell_\infty(\Gamma).
\]
\end{definition}

\begin{remark}
The difference between our definition of the space $\bb$ and the definitions of the space $\B$ from \cite{AH} and \cite{AM} is in the form of nodes.
Indeed, we allow nodes of the form $(q+1, \xi, m_j^{-1}, \varepsilon e^*_\eta)$, whereas they allow $(q+1, \xi, m_j^{-1}, b^*)$, where $b^*$ is 
from a finite net in the unit ball of $\ell_1(\Gamma_q\setminus\Gamma_p)$.
We decided to present the simpler version to not further complicate already quite technical proofs, but our results are 
still true for $\B$ - the version from \cite{AH} and \cite{AM}.
To obtain a full proof for $\B$ we should consider convex combinations of functionals $\pm e^*_\eta$ in the definition of tree-analysis, what would add an additional level of notational complexity in the following proofs.
On the other hand, it is easily seen that convex combinations do not break correctness of the proofs.
\end{remark}

\begin{notation}
    For a functional $h\in \langle d^*_\gamma \mid \gamma \in \Gamma \rangle$ we define $\rng h = [p,q]$, 
    where $p,q$ are respectively maximal, minimal with $h \in \langle d^*_\gamma \mid \rank(\gamma) \in [p,q] \rangle$.
    Fox a block $ x \in \langle d_\gamma \mid \gamma \in \Gamma \rangle$ we define $\rng x = [p,q]$,
    where $p,q$ are respectively maximal, minimal with $x \in \langle d_\gamma \mid \rank(\gamma) \in [p,q] \rangle$.
\end{notation}

The space $\bb$ is an $\Li$-space with dual isomorphic to $\ell_1$ and saturated by reflexive subspaces with an unconditional basis \cite{AH}.

We introduce different types of analysis of evaluation functionals following  \cite{AH} and \cite{GPZ}, adjusting their scheme to our situation. 

\subsection{Different types of analyses of evaluation functionals} 

\ \\
\textbf{The evaluation analysis of $e_{\gamma}^*$}.

First, we notice that every $\gamma\in\Gamma$ admits a unique analysis as follows (Prop. 4.6 \cite{AH}).
Let $\w(\gamma)=m_{j}^{-1}$.
Then using backward induction we determine a sequence of sets $(I_i,\varepsilon_i e_{\eta_i}^*,\xi_i)_{i=1}^{a}$ 
so that $\xi_a=\gamma$, $\xi_1=(\max I_1+1,0,m_j, \varepsilon_1 e_{\eta_1}^*),$ 
$\xi_i=(\max I_i,\xi_{i-1},m_j, \varepsilon_i  e^*_{\eta_i}),$ 
and $\max I_{i-1}+2 = \min I_i$
for every $1<i\leq a$.

Repeating the reasoning of \cite{AH}, as $e^*_{\xi}=d^*_{\xi}+c^*_{\xi}$ for each $\xi\in\Gamma$, with the above notation we have
\[
e^*_{\gamma}= \sum_{i=1}^{a}d^*_{\xi_{i}}+m_{j}^{-1}\sum_{i=1}^{a}\varepsilon_{i} P^*_{I_i}e^*_{\eta_{i}}.
\]

\begin{definition}
Let $\gamma\in\Gamma$.
Then the sequence $(I_i,\varepsilon_i e_{\eta_i}^*, \xi_i)_{i=1}^{a}$ satisfying all the above properties will be called the evaluation analysis of $\gamma$.

We define the bd-part and mt-part of $e^*_\gamma$ as 
\[
\bdp(e^*_\gamma)=\sum_{i=1}^{a} d_{\xi_i}^*,  \   
\mt(e^*_\gamma)=m_{j}^{-1}\sum_{i=1}^{a}\varepsilon_{i} P^*_{I_i}e^*_{\eta_{i}}.
\]
\end{definition}

\begin{remark}\label{mt-part}
Fix $(\eta_s)_{s=1}^a\subset\Gamma$, $(I_s)_{s=1}^a$ a sequence of intervals of natural numbers with $\max I_{s-1}+2 = \min I_s$,
and $(\varepsilon_s)_{s=1}^a\subset \{\pm 1\}$,  with $a\leq n_j$, $j\leq q_1$,
$\eta_s\in\Gamma_{\max I_s}\setminus \Gamma_{\min I_s-1}$, $s=1,\dots,a$.

Then the formulae $\xi_1=(\max I_1+1,0,m_j, \varepsilon_1  e^{*}_{\eta_1})$ and 
$\xi_s=(\max I_s+1,\xi_{s-1},m_j, \varepsilon_s e^*_{\eta_s})$, for any $s\leq a$, give well-defined nodes.

\end{remark}

\textbf{The $I$(interval)-analysis of a functional $e_{\gamma}^*$}.

Fix $I\subset\N$ and $\gamma\in\Gamma$. 
Let $\w(\gamma)=m_{j}^{-1}$, $a\leq n_{j}$ and $(I_i,\varepsilon_i e_{\eta_i}^*, \xi_i)_{i=1}^{a}$ be the evaluation analysis of $\gamma$.
We define the $I$-analysis of $e_{\gamma}^*$ as follows:
\begin{enumerate}

\item[(a)] If for at least one $i$ we have $P^*_{I_i\cap I}e^*_{\eta_i}\neq 0$, then the $I$-analysis of $e_{\gamma}^*$ is of the following form
\[
(I_i\cap I,\varepsilon_i e^*_{\eta_i},\xi_i)_{i\in A_I}, 
\]
where $A_I=\{i \mid \ P^*_{I_i\cap I}e^*_{\eta_i}\neq 0\}$. In this case we say that $e_{\gamma}^*$ is $I$-decomposable.
\item[(b)] If $P^*_{I_i\cap I}e^*_{\eta_i}=0$ for all $i=1,\dots,a$, then we assign no $I$-analysis to $e_{\gamma}^*$ and we say that $e_{\gamma}^*$ 
is $I$-indecomposable. 
\end{enumerate}

Now we introduce the tree-analysis of $e_{\gamma}^*$ analogous to the tree-analysis of a functional in a mixed Tsirelson space (see \cite{AT} Chapter II.1).

We start with some notation. We denote by $(\T,\preceq)$ a finite tree, whose elements are finite sequences of natural numbers ordered by the initial segment partial order. Given $t\in\T$ denote by $S_t$ the set of immediate successors of $t$.

Let $(I_t)_{t\in\T}$ be a tree of intervals of $\N$ such that $t\preceq s$ iff $I_t\supset I_s$ and $t,s$ are incomparable iff $I_t\cap I_s=\emptyset$. 

\textbf{The tree-analysis of a functional $e_{\gamma}^*$}.

Let $\gamma\in\Gamma$. The tree-analysis of $e_{\gamma}^*$ is a family of the form $(I_t,\varepsilon_t,\eta_t )_{t\in\T}$ defined inductively in the following way:
\begin{enumerate}
\item $\T$ is a finite tree with a unique root denoted by $\emptyset$.
\item Set $\eta_{\emptyset}=\gamma$, $I_{\emptyset}=(0,\rank\gamma)$, $\varepsilon_\emptyset=1$ and let $(I_i,\varepsilon_i e_{\eta_i}^*, \xi_i)_{i=1}^{a}$ be the evaluation analysis of $e^*_{\eta_{\emptyset}}$. 
Set $S_{\emptyset}=\{(1),(2),\ldots,(a)\}$ and for every $s=(i)\in S_{\emptyset}$ define $(I_s,\varepsilon_s,\eta_s)=(I_i,\varepsilon_i,\eta_i)$.
\item Assume that for $t\in\T$ the tuple $(I_t,\varepsilon_t,\eta_t)$ is defined. Let $(I_i,\varepsilon_i e^*_{\eta_i}, \xi_i)_i$ be the evaluation analysis of $e_{\eta_t}^*$. Consider two cases:
\begin{enumerate}
\item If $e_{\eta_t}^*$ is $I_t$-decomposable, let $(I_i,\varepsilon_i e^*_{\eta_i}, \xi_i)_{i\in A_{I_t}}$ be the $I_t$-analysis of $e_{\eta_t}^*$. Set $S_t=\{(t^\smallfrown i): i\in A_{I_t}\}$. For every $s=(t^\smallfrown i)\in S_t$, let $(I_s,\varepsilon_s,\eta_s)=(I_i,\varepsilon_i,\eta_i)$.
 \item If $e_{\eta_t}^*$ is $I_t$-indecomposable, then $t$ is a terminal node of the tree-analysis. 
\end{enumerate}
\end{enumerate}

\begin{remark}
For every $\gamma\in\Gamma$ its tree analysis $(I_t,\varepsilon_t,\eta_t )_{t\in\T}$ is uniquely defined.
This is in contrast to the mixed-Tsirelson case.
Moreover, for every $t_0\in\T$ the tree analysis of $e^*_{\eta_{t_0}}$ restricted to $I_{t_0}$ is $(I_t,\varepsilon_t,\eta_t )_{t\in\T,\, t\succeq t_0}$.
\end{remark}

\begin{notation}
    For every $\gamma \in\Gamma$, its evaluation analysis $(I_i,\varepsilon_i e_{\eta_i}^*, \xi_i)_{i=1}^{a}$, its tree analysis $(I_t, \varepsilon_t, \eta_t)_{t\in\T}$, 
    and a subset $\T'\subseteq \T$ we write 
    \begin{enumerate}
        \item $m_t = m_{i_0}$, $n_t=n_{i_0}$, where $i_0\in\N$ is such that $m_{i_0}=\w(\eta_t)^{-1}$, for all $t\in \T$,
        \item $|\bdp|(e^*_\gamma)=\sum_{i=1}^{a} |d_{\xi_i}^*|$,
        \item $|\bdp|(e^*_\gamma, \T') = 
                \sum_{t \in \T\cap\T'} |\bdp|(e^*_{\eta_t})\circ P_{I_t}$.
    \end{enumerate}
\end{notation}

The following notion is very helpful in regularising ranges of nodes in a tree analysis of a given node
versus ranges of a given block sequence.
\begin{definition}
Let $(x_k)_k \in \bb$ be a block sequence, $\gamma\in\Gamma$,
and $(I_t, \varepsilon_t, \eta_t)_{t\in\T}$ be the tree analysis of $e^*_\gamma$.
\begin{enumerate}
    \item We say that $e^*_{\eta_t}P_{I_t}$ covers $x_k$ if 
    $t\in\T$ is maximal in $\T$ with
    $\rng(x_k)\cap\rng(e^*_\gamma) \subset 
    \rng(e^*_{\eta_t}P_{I_t})$.
    We call $t$ the covering index for $x_k$.
    \item We say that a finite sequence $(A_l)_{l=1}^i$ of finite intervals $\N$ is comparable with the sequence $(x_k)_k$ if for all $l,k$
    \[
        A_l\subseteq \rng x_k \text{ or }
        \supp x_k \cap \bigcup_{l=1}^iA_l \subseteq A_k \text{ or }
        A_l \cap \rng x_k = \emptyset.
    \]
    \item We say that $e^*_\gamma$ is comparable with $(x_k)_k$ if
    the sequence $(\rng e^*_{\eta_t}P_{I_t})_{t\in\T}$ is
    comparable with $(x_k)_k$.
\end{enumerate}
\end{definition}

\subsection{Rapidly Increasing Sequences}
Vectors of the following form are the building blocks for the target sequences of interest, i.e. RISes and two-level RISes.
\begin{definition}
Let $X$ be a Banach space with a basis, $x \in X$, and $C \geq 1$.
We call $x$ a $C-\ell^i_1$ $average$ if there is a block sequence 
$(x_l)_{l=1}^i$ of vectors from $X$ with norms uniformly bounded from above by $C$ and such that
$x=\frac{1}{i}\sum_{l=1}^i x_l$.
\end{definition}

\begin{lemma}[Lemma 8.2, 8.3 \cite{AH}]\label{average-bound}
Let $C\geq 1$, $i\in \N$.
Then
\begin{enumerate}
    \item there exists a $C-\ell^i_1$ average in every infinite dimensional 
        subspace of $\bb$,
    \item for every $\gamma \in \Gamma$ and $C-\ell^i_1$ average  $x\in\bb$ we have 
        $|d^*_\gamma(x)| \leq \frac{3C}{i}$.
\end{enumerate}
\end{lemma}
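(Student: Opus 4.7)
The plan is to prove the two parts separately.

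For \textbf{Part (1)}, I would invoke the classical $\ell_1^i$-averages construction of James and Schlumprecht (cf.\ \cite{S}, \cite{AD}). After a small perturbation, the infinite-dimensional subspace $Y \subseteq \bb$ contains a normalized block basic sequence $(y_k)_k$ of the basis $(d_\gamma)_{\gamma\in\Gamma}$. If no $C$-$\ell_1^i$ average existed in $Y$, then a combinatorial tree-averaging on $(y_k)$ would force the norms of suitable averages at successive levels to decay by a factor bounded away from $1$; iterating through sufficiently many levels produces a subsequence equivalent to the summing basis of $c_0$. Since $\bb$ is reflexively saturated by \cite{AH}, and in particular does not contain $c_0$, this is the desired contradiction.

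For \textbf{Part (2)}, the key observation is that, because $(d^*_\gamma)_{\gamma\in\Gamma}$ is biorthogonal to the basis $(d_\gamma)_{\gamma\in\Gamma}$ of $\bb$, one has $d^*_\gamma(y) = 0$ for every block vector $y\in\langle d_\mu\mid\mu\in\Gamma\rangle$ whose range does not contain $\rank(\gamma)$. Writing $x = \frac{1}{i}\sum_{l=1}^i x_l$ for a block sequence $(x_l)_{l=1}^i$ with $\|x_l\| \leq C$, the intervals $\rng(x_l)$ are pairwise disjoint, so $\rank(\gamma)$ belongs to $\rng(x_{l_0})$ for at most one index $l_0$. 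Hence
\[
|d^*_\gamma(x)| = \tfrac{1}{i}|d^*_\gamma(x_{l_0})| \leq \tfrac{1}{i}\|d^*_\gamma\|_{\bb^*}\|x_{l_0}\| \leq \tfrac{3C}{i},
\]
where the bound $\|d^*_\gamma\|_{\bb^*} \leq 3$ comes from the uniform basis-constant estimate for the Bourgain-Delbaen basis, as computed in \cite{AH}.

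The real content is Part (1): its proof relies on the classical tree-averaging trick combined with the $c_0$-avoidance of $\bb$, and that is where the technical work sits. Part (2), once the range-support property of $(d^*_\gamma)$ is noted, is essentially immediate from the block structure of $x$, with the final numerical constant $3$ inherited from the quantitative control of the Bourgain-Delbaen basis in the ambient construction.
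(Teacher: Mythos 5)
The paper does not prove this lemma; it quotes it verbatim from Lemmas 8.2 and 8.3 of \cite{AH}, so your attempt has to be judged against the argument there rather than against anything in this paper.

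Your Part (2) is essentially the argument in \cite{AH} and is correct in outline: because the ranges $\rng(x_l)$ are pairwise disjoint intervals of ranks, $\rank(\gamma)$ lies in at most one of them, so $d^*_\gamma(x)=\tfrac{1}{i}d^*_\gamma(x_{l_0})$ and the problem reduces to bounding $\|d^*_\gamma\|$. But the constant $3$ is not a generic ``uniform basis-constant estimate''; it is a numerical computation specific to this construction. Writing $d^*_\gamma = e^*_\gamma - c^*_\gamma$, one has $\|e^*_\gamma\|_{\ell_1(\Gamma)}=1$ and $\|c^*_\gamma\|_{\ell_1(\Gamma)}\leq 1 + m_j^{-1}\|P^*_{(p,q]}\|_{\ell_1\to\ell_1}$, and the bound $\|d^*_\gamma\|\leq 3$ then comes from the interval-projection bound in Theorem 3.5 of \cite{AH} together with the standing hypothesis $m_j\geq m_1 = 4$. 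As written, ``$\|d^*_\gamma\|_{\bb^*}\leq 3$'' is asserted, not derived; you should either cite that computation precisely or reproduce it.

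Part (1) gestures at the correct dichotomy (iterated averaging versus a $c_0$-copy, with the latter excluded by reflexive saturation), but as written it is a sketch rather than a proof and contains two inaccuracies worth flagging. First, the definition of a $C$-$\ell_1^i$ average in this paper omits the normalization $\|x\|=1$, which is present in \cite{AH} and \cite{AT} and is essential --- without it, existence is trivial and the lemma carries no content, so your proof should make that normalization explicit. Second, the description of the mechanism is slightly off: when no $C$-$\ell_1^n$ averages exist, the norms of the iterated dyadic averages decay geometrically, and it takes a further (nontrivial) argument --- AT Lemma II.22, or the James-type dichotomy --- to turn this decay into a normalized block sequence with uniformly bounded partial sums, i.e.\ one equivalent to the \emph{unit vector basis} of $c_0$ (the ``summing basis of $c_0$'' is not a basis of $c_0$). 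That lemma is where the real work sits and needs to be cited or proved; as it stands the claim is carried entirely by ``a combinatorial tree-averaging would force\ldots'', which is not an argument.
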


\begin{lemma}[Lemma II.23 \cite{AT}]\label{norm-proj}
Let $C\geq 1$, $j\leq i$, let $(E_l)_{l=1}^j$ be a sequence of pairwise disjoint intervals and let $x$ be a $C-\ell_1^i$-average in some Banach space.
Then
\[
    \sum_{l=1}^j \|P_{E_l}x\| \leq (1+\frac{2j}{i})\sup_{1\leq l\leq j}\|P_{E_l}x\| .
\]
\end{lemma}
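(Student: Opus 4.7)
Write the $C$-$\ell_1^i$-average as $x=\frac{1}{i}\sum_{k=1}^{i}x_k$ with $(x_k)$ a block sequence of vectors satisfying $\|x_k\|\le C$. The plan is a combinatorial decomposition of the $x_k$'s by how their supports meet the intervals $E_1,\dots,E_j$. Each index $k$ falls into exactly one of three classes: (i) $\supp x_k\subseteq E_l$ for a unique $l$ (put $k$ into $A_l$); (ii) $\supp x_k$ is disjoint from every $E_l$; or (iii) $\supp x_k$ meets some $E_l$ without being contained in any single one. Write $K_{\mathrm{split}}$ for the last set, and for each $l$ put $B_l=\{k\in K_{\mathrm{split}}:\supp x_k\cap E_l\ne\emptyset\}$.

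The first key estimate is the counting bound $|K_{\mathrm{split}}|\le 2j$. Since the supports of the $x_k$'s are pairwise disjoint intervals of $\N$ and each $E_l$ has only two boundary points, each such point can lie strictly inside $\supp x_k$ for at most one $k$. With this in hand, decompose
\[
P_{E_l}x \;=\; \frac{1}{i}\sum_{k\in A_l}x_k \;+\; \frac{1}{i}\sum_{k\in B_l}P_{E_l}x_k.
\]
Applying the triangle inequality, using $\|x_k\|\le C$ and bi-monotonicity of the basis (so that $\|P_{E_l}x_k\|\le\|x_k\|$), and summing over $l$ while noting that $\sum_l|A_l|\le i$, gives the preliminary estimate $\sum_l\|P_{E_l}x\|\le(1+\tfrac{2j}{i})\,C$.

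The main step of the argument is to sharpen the constant $C$ to $\sup_l\|P_{E_l}x\|$ on the right-hand side. The plan is to use the identity $i\,P_{E_l}x=\sum_{k\in A_l}x_k+\sum_{k\in B_l}P_{E_l}x_k$ to show that each ``clean'' partial sum $\sum_{k\in A_l}x_k$ has norm controlled by $i\|P_{E_l}x\|$ plus the crossing error, which is already bounded via the counting estimate. Feeding this back into the decomposition, using the supremum in place of the crude constant $C$, while carefully avoiding circular reasoning, produces the advertised $(1+\tfrac{2j}{i})\sup_l\|P_{E_l}x\|$; the split contribution, being supported on at most $2j$ block vectors, is absorbed into the additive error $\tfrac{2j}{i}\sup_l\|P_{E_l}x\|$. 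The delicate point, where I expect most of the technical work, is precisely this last refinement---controlling the ``clean'' sum by the supremum rather than by $C$---and it is here that the hypothesis $j\le i$ is used essentially, ensuring that the crossing correction is a genuine perturbation rather than the dominant term.
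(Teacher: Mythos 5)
Your decomposition of $\{1,\dots,i\}$ according to how $\supp x_k$ sits relative to the $E_l$'s, together with the counting of blocks crossing a boundary, is exactly the standard argument, and it does yield $\sum_{l=1}^j\|P_{E_l}x\|\le(1+\tfrac{2j}{i})C$. One small repair: to sum the estimate over $l$ you need $\sum_{l=1}^j|B_l|\le 2j$, not merely $|K_{\mathrm{split}}|\le 2j$; a single block whose range spans several $E_l$'s belongs to several $B_l$'s simultaneously. The stronger count follows from the same disjointness argument, since for each $k\in B_l$ you can select a boundary of $E_l$ that the range of $x_k$ straddles, different $l$'s give different boundaries, and different $k$'s give disjoint ranges, so the assignment $(l,k)\mapsto$ boundary is injective into a set of size $2j$.

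The genuine gap is the ``main step'' you sketch at the end: sharpening $C$ to $\sup_{l}\|P_{E_l}x\|$ cannot be done, because that version of the inequality is false. Take $X=\ell_1$, $x_k=e_k$, $x=\frac1i\sum_{k=1}^{i}e_k$ (a $1$-$\ell_1^i$-average) and $E_l=\{l\}$, $l=1,\dots,j$. Then $\sum_l\|P_{E_l}x\|=j/i$ while $\sup_l\|P_{E_l}x\|=1/i$, so the claimed bound would read $j/i\le(1+2j/i)/i$, which fails already for $j\ge 2$, $i\ge 5$. The statement as printed is a misprint: Lemma II.23 of \cite{AT}, which is the form the paper actually invokes later (e.g.\ the bound $\sum_{s\in S_i}|e^*_{\eta_s}P_{I_s}(y_i)|\le 3\bigl(1+2|S_i|/n_{j_i}\bigr)$ inside the proof of Lemma~\ref{norm-sum}, and the analogous $9(1+2\#B_j/n_{k_{i,j}})$ in the proof of Theorem~\ref{thm:a}), has the averaging constant $C$ on the right-hand side, not $\sup_l\|P_{E_l}x\|$. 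Your ``preliminary estimate'' is therefore the whole lemma; there is no further refinement to make, and the point you expected most of the technical work to live in does not exist.
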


\begin{remark}\label{aveproj}
It is easy to see that the above Lemmas \ref{average-bound} and \ref{norm-proj} are true for projections of averages on intervals of positive integers.
\end{remark}

Now we define the main objects of interest.
The crucial property they enjoy is that they behave like the basis of $\bb$ but can be found in every subspace.
\begin{definition}\label{RIS}
Let $I\subseteq\N$ be an interval and $(x_i)_{i\in I}$ be a block sequence in $\bb$.
We call it a \textit{rapidly increasing sequence} (RIS) 
if there exist a constant $C\geq 1$ and an increasing sequence 
$(j_i)_{i\in I}$ (growth index) such that for all $i$ we have
\begin{enumerate}
    \item $\|x_i\| \leq C$,
    \item $j_{i+1} > \max \rng x_i$,
    \item $|x_i(\gamma)| \leq Cm_\gamma^{-1}$ for all 
        $\gamma \in \Gamma$ with $m_\gamma < m_{j_i}$.
\end{enumerate} 
\end{definition}

In what comes later we need additional structure in RIS, namely
\begin{definition}
Let $I\subseteq\N$ be an interval.
We call a sequence $(y_i)_{i\in I}\subset \bb$ \textit{a two-level $C$-RIS} if $(y_i)_{i\in I}$ is a normalised $C$-RIS such that
for each $i\in I$ there are a sequence  
$n_{j_i}<n_{j_{i,1}}<\cdots<n_{j_{i,n_{j_i}}}$ of positive integers and a  normalised $C$-RIS $(y_{i,j})_{j=1}^{n_{j_i}}$ such that
$y_i = \frac{c_im_{j_i}}{n_{j_i}} \sum_{j=1}^{n_{j_i}}y_{i,j}$, where
$y_{i,j}$ is a normalised $C-\ell_1^{n_{j_{i,j}}}$ average with $\max \rng y_{i,j} + j_{i,j} <\min\rng y_{i,j+1}$,
for $j=1,\dots,n_{j_i}$, and $c_i>0$ is some normalising constant.
\end{definition}

\begin{proposition}[Prop. 5.6 \cite{AH}]\label{basic-ineq}
Let $C\geq 1$ and let $(x_i)_{i=1}^{n_j}$ be a $C$-RIS in $\bb$. 
Then 
\[
    \| \frac{m_j}{n_j} \sum_{i=1}^{n_j} x_i \|
    \leq 10C.
\]
Moreover, for all $\gamma \in \Gamma$ with $n_\gamma > n_j$
\[
    |\frac{m_j}{n_j}\sum_{i=1}^{n_j} x_i(\gamma)|
    \leq \frac{10C}{m_j}.
\]
\end{proposition}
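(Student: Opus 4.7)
The plan is to fix $\gamma\in\Gamma$ with $\w(\gamma)=m_k^{-1}$ and bound $|e^*_\gamma(y)|$, where $y=\frac{m_j}{n_j}\sum_{i=1}^{n_j}x_i$, by analysing the tree-analysis $(I_t,\varepsilon_t,\eta_t)_{t\in\T}$ of $e^*_\gamma$. The strategy parallels the proof of the analogous inequality in the mixed Tsirelson space $\mT$ (cf.\ Proposition~\ref{baver-norm}): the role of a basis vector of $\mT$ is played by a RIS vector $x_i$, and the role of the tree analysis of a functional $f\in W$ is played by the tree analysis of $e^*_\gamma$. The uniform bound $\|x_i\|\leq C$ and the pointwise decay $|x_i(\eta)|\leq C m_\eta^{-1}$ for $m_\eta<m_{j_i}$ furnished by RIS property (3) together replace the orthonormality of the $\mT$-basis.

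The steps in order: \textbf{(i)} Expand $e^*_\gamma$ recursively through its tree-analysis, substituting $e^*_{\eta_t}=\bdp(e^*_{\eta_t})+\mt(e^*_{\eta_t})$ at every non-terminal node $t\in\T$ and iterating until all remaining subfunctionals become $I_t$-indecomposable, which yields
\[
    |e^*_\gamma(y)|\,\leq\,\sum_{t\in\T}\Bigl(\prod_{s\prec t}m_s^{-1}\Bigr)\bigl|\bdp(e^*_{\eta_t})\circ P_{I_t}(y)\bigr|.
\]
\textbf{(ii)} Assign to each $x_i$ its covering node $t_i\in\T$, and split the indices $i=1,\dots,n_j$ into those with $m_{t_i}<m_{j_i}$ (low-weight cover, where RIS property (3) activates at $\eta_{t_i}$) and those with $m_{t_i}\geq m_{j_i}$ (high-weight cover). \textbf{(iii)} At each tree node $t$, bound $|\bdp(e^*_{\eta_t})\circ P_{I_t}(x_i)|$ for the relevant $i$ by passing from $d^*$-evaluations to pointwise $e^*$-evaluations through the identity $d^*_{\xi}=e^*_{\xi}-c^*_{\xi}$ and the recursive structure of $c^*_{\xi}$, so as to invoke RIS property (3) (or, when $x_i$ is an $\ell_1^i$-average, Lemma~\ref{average-bound}(2)) at the appropriate scale. \textbf{(iv)} Count boundary contributions: for fixed non-terminal $t$ the disjoint support of the $x_i$ forces at most $a_t+1\leq n_t+1$ of them with $t_i=t$ to straddle the children of $t$ in $\T$, and each such contributes at most $C$ via $\|x_i\|\leq C$. \textbf{(v)} Telescope the sum over $\T$: the growth assumption $n_{j+1}\geq m_{j+1}^2(4n_j)^{\log_2 m_{j+1}}$ ensures that the straddling contribution at depth $d$ is dominated by a geometric factor times the one at depth $d-1$, so that the full tree-sum, after multiplication by the prefactor $m_j/n_j$, is bounded by a uniform constant at most $10C$.

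The main obstacle is step (iii): converting the pointwise evaluation bounds that RIS property (3) delivers into bd-part bounds on $|\bdp(e^*_{\eta_t})\circ P_{I_t}(x_i)|$. Since $\bdp(e^*_{\eta_t})$ is a sum of $d^*_{\xi}$ (with $a_t\leq n_t$ terms) rather than evaluation functionals, one must peel off $d^*_{\xi}=e^*_{\xi}-c^*_{\xi}$ and iterate through the recursive definition of $c^*_{\xi}$, reducing $d^*$-evaluations against $x_i$ to evaluations by lower-weight functionals at which the RIS decay can again be applied; disjointness of supports guarantees that each $d^*$-functional sees at most one $x_i$, which together with the straddling count from (iv) keeps the per-node contribution $O(C)$ after multiplication by $w_t=\prod_{s\prec t}m_s^{-1}$. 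The moreover statement follows by rerunning the same estimate with the common prefactor $m_k^{-1}\leq m_{j+1}^{-1}\leq m_j^{-1}$ present at the root of the tree when $k>j$; the only separate point is to verify that the root-level bd-part $\bdp(e^*_\gamma)(y)$, which does not carry this prefactor, is still bounded by $O(C/m_j)$, which follows from the same $d^*_\xi=e^*_\xi-c^*_\xi$ reduction applied at the weight $m_k^{-1}$.
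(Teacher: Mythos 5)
The paper itself does not prove this Proposition --- it is imported verbatim from \cite{AH} (Prop.~5.6), whose proof runs through the so-called Basic Inequality (Prop.~5.4 of \cite{AH}): one first shows that for a $C$-RIS $(x_r)$ and any $\gamma\in\Gamma$ there is a functional $g$ in an auxiliary mixed Tsirelson norming set such that $|e^*_\gamma(\sum_r a_r x_r)|$ is controlled by $C(\max_r|a_r| + g(\sum_r|a_r|e_r))$, and then applies the elementary Tsirelson estimate for weighted basis averages (essentially Proposition~\ref{baver-norm}) to the coefficient vector $\frac{1}{n_j}\sum e_r$. Your proposal bypasses that comparison entirely and attempts a direct telescoping over the tree analysis of $e^*_\gamma$. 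This is a genuinely different route in spirit, but as written it has gaps that are not cosmetic.

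First, step (iii) misuses RIS property (3). That property bounds $|e^*_\eta(x_i)|$ only when $m_\eta<m_{j_i}$; the nodes $\xi$ appearing in $\bdp(e^*_{\eta_t})$ carry no such weight restriction, so ``peeling off'' $d^*_\xi=e^*_\xi-c^*_\xi$ and iterating does not produce a decaying estimate --- in general one only gets $|d^*_\xi(x_i)|\lesssim C$, which does not telescope. (And Lemma~\ref{average-bound}(2) is unavailable here: the Proposition is for arbitrary $C$-RIS, not RIS of $\ell^i_1$-averages.) Second, step (v) misattributes the role of the growth assumption $n_{j+1}\geq m_{j+1}^2(4n_j)^{\log_2 m_{j+1}}$: that hypothesis governs the combinatorics of the mixed Tsirelson norming set $W$, which is precisely the quantitative input the \cite{AH} proof packages through the Basic Inequality. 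It does not give a geometric decay in tree depth; along a branch of the tree analysis the weights $m_t$ can jump up and down and are unrelated to the depth, so there is no a priori depth-geometric gain. Third, and most concretely, your treatment of the ``moreover'' part fails: when $n_\gamma>n_j$, the root-level bd-part $\bdp(e^*_\gamma)=\sum_{i=1}^a d^*_{\xi_i}$ has up to $a\leq n_\gamma$ terms, each $d^*_{\xi_i}$ hitting one $x_l$ and contributing $O(C)$; the resulting naive bound $\frac{m_j}{n_j}\cdot a\cdot O(C)$ is of order $\frac{m_j\, n_\gamma}{n_j}C$, which for $n_\gamma>n_j$ is unbounded --- far from the claimed $O(C/m_j)$. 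The \cite{AH} argument avoids this by absorbing all bd-contributions across all levels into a single $O(C\max_r|a_r|)=O(C/n_j)$ term in the Basic Inequality, a genuinely non-trivial bookkeeping carried out by induction on rank. To make your direct telescoping rigorous you would essentially have to reconstruct that induction inline, so the sketch as it stands does not constitute an independent proof.
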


\begin{corollary}\label{cis-bounds}
For $c_i$'s in the definition of two-level RIS we have 
$\frac{1}{10C} \leq c_i \leq 1$.
\end{corollary}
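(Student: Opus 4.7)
Writing $y_i = c_i z_i$ with $z_i := \frac{m_{j_i}}{n_{j_i}}\sum_{j=1}^{n_{j_i}} y_{i,j}$, normalisation of $y_i$ gives $c_i = \|z_i\|^{-1}$, so the two claimed bounds reduce to $1 \leq \|z_i\| \leq 10C$. The upper estimate $\|z_i\|\leq 10C$ is immediate from Proposition~\ref{basic-ineq} applied to the $C$-RIS $(y_{i,j})_{j=1}^{n_{j_i}}$ of length $n_{j_i}$, yielding $c_i\geq(10C)^{-1}$.

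For the lower estimate $\|z_i\|\geq 1$, the plan is to construct, for each $\epsilon > 0$, a node $\gamma\in\Gamma$ with $e^*_\gamma(z_i)\geq 1-\epsilon$. First, for each $s=1,\dots,n_{j_i}$ use $\|y_{i,s}\|=1$ and the finite-dimensional $\ell_\infty$-type structure of $\langle d_\eta:\rank(\eta)\leq\max\rng y_{i,s}\rangle$ to select $\eta_s\in\Gamma$ with $\rank(\eta_s)\in\rng y_{i,s}$ and $|e^*_{\eta_s}(y_{i,s})|\geq 1-\epsilon$. Then choose intervals $I_s$ satisfying $\rng y_{i,s}\subseteq I_s$, $\max I_s=\max\rng y_{i,s}$, and the gap condition $\min I_{s+1}=\max I_s+2$; this is consistent because $\max\rng y_{i,s}+j_{i,s}<\min\rng y_{i,s+1}$ forces $\min\rng y_{i,s+1}\geq\max\rng y_{i,s}+2$. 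By Remark~\ref{mt-part}, the data $(I_s,\varepsilon_s e^*_{\eta_s})_{s=1}^{n_{j_i}}$, with $\varepsilon_s:=\mathrm{sgn}(e^*_{\eta_s}(y_{i,s}))$, assembles into a valid node $\gamma$ of weight $m_{j_i}^{-1}$ and age $n_{j_i}$, with evaluation analysis $(I_s,\varepsilon_s e^*_{\eta_s},\xi_s)_{s=1}^{n_{j_i}}$.

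It remains to evaluate $e^*_\gamma(z_i)=\mt(e^*_\gamma)(z_i)+\bdp(e^*_\gamma)(z_i)$. Since each $I_s$ covers exactly $\rng y_{i,s}$ and is disjoint from $\rng y_{i,j}$ for $j\neq s$, one has $P_{I_s}z_i=\frac{m_{j_i}}{n_{j_i}}y_{i,s}$, so
\[
\mt(e^*_\gamma)(z_i)=\frac{1}{m_{j_i}}\sum_{s}\varepsilon_s e^*_{\eta_s}(P_{I_s}z_i)=\frac{1}{n_{j_i}}\sum_{s}|e^*_{\eta_s}(y_{i,s})|\geq 1-\epsilon.
\]
Each $\xi_s$ has $\rank(\xi_s)=\max\rng y_{i,s}+1$, which lies strictly between $\max\rng y_{i,s}$ and $\min\rng y_{i,s+1}$, hence outside $\rng y_{i,j}$ for every $j$; therefore $d^*_{\xi_s}(y_{i,j})=0$ for all $s,j$, and $\bdp(e^*_\gamma)(z_i)=0$. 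Combining, $e^*_\gamma(z_i)\geq 1-\epsilon$, and $\epsilon\downarrow 0$ gives $\|z_i\|\geq 1$, i.e.\ $c_i\leq 1$.

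The main technical obstacle is the very first step: producing a near-norming $\eta_s$ for $y_{i,s}$ whose rank actually lies in $\rng y_{i,s}$, since a priori the supremum $\|y_{i,s}\|=\sup_{\eta}|e^*_\eta(y_{i,s})|$ could be witnessed only by $\eta$ of rank above $\max\rng y_{i,s}$. Should the Bourgain-Delbaen isomorphism constant of $\bb$ obstruct this, one falls back on allowing a higher-rank $\eta_s$ and correspondingly enlarging $\max I_s$; the $\bdp$-part then becomes nonzero but is controlled via Lemma~\ref{average-bound} applied to each $y_{i,j}$ as a $C$-$\ell_1^{n_{j_{i,j}}}$-average, with the fast growth of $(n_j)_j$ absorbing the error.
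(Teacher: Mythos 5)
The bound $c_i\geq\frac{1}{10C}$ via Proposition~\ref{basic-ineq} applied to the $C$-RIS $(y_{i,j})_{j=1}^{n_{j_i}}$ is correct, and since the paper offers no proof of the corollary this is presumably what is intended for that half. The other half, $c_i\leq 1$ (equivalently $\|z_i\|\geq 1$ with $z_i=\frac{m_{j_i}}{n_{j_i}}\sum_j y_{i,j}$), has a genuine gap at exactly the point you flag as the main obstacle, and your fallback does not repair it.

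The problem is that $\bb$ is assembled from \emph{isomorphic}, not isometric, copies of finite-dimensional $\ell_\infty$'s. For $y_{i,s}\in\langle d_\delta:\rank(\delta)\leq N\rangle$ with $N=\max\rng y_{i,s}$, one has $\sup\{|e^*_\eta(y_{i,s})|:\rank(\eta)\leq N\}=\|y_{i,s}\restriction\Gamma_N\|_\infty\geq\|y_{i,s}\|/C_{\mathrm{BD}}$, where $C_{\mathrm{BD}}$ is the Bourgain--Delbaen constant of the construction (here $C_{\mathrm{BD}}=2$, since $\theta=m_1^{-1}=1/4$ and $C_{\mathrm{BD}}=(1-2\theta)^{-1}$). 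So a locally supported near-norming $\eta_s$ only gives $|e^*_{\eta_s}(y_{i,s})|\geq 1/2$, not $1-\epsilon$; this is consistent with the paper itself, which in the proof of Theorem~\ref{thm:a} chooses $\eta_{t_p}$ with $|e^*_{\eta_{t_p}}(y_p)|\geq 1/2$ and $\rank(\eta_{t_p})\in\rng y_p$ at the analogous step. As written, your construction therefore yields $\|z_i\|\geq 1/2$, i.e.\ $c_i\leq 2$, not $c_i\leq 1$. The fallback of letting $\rank(\eta_s)$ exceed $\max\rng y_{i,s}$ is hand-waved: once $\rank(\eta_s)$ approaches $\min\rng y_{i,s+1}$ the intervals $I_s$ can no longer both contain $\rng y_{i,s}$ and stay disjoint from the next block, so $P_{I_s}z_i\neq\frac{m_{j_i}}{n_{j_i}}y_{i,s}$ and $d^*_{\xi_s}(z_i)\neq 0$; Lemma~\ref{average-bound} controls these bd-terms but does not recover the factor lost in the mt-part. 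The tool that would plausibly let you push $\rank(\eta_s)$ up while keeping control is Proposition~\ref{split} (the gap $j_{i,s}$ in the two-level RIS definition provides exactly the room $N+i$ there), but the present sketch does not invoke it and does not track its hypotheses (notably $K\leq\min\rng e^*_\gamma$), so the step remains unjustified.
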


\begin{lemma}[Corollary 8.5\cite{AH}] \label{er}
Let $C>2$. 
In every infinite dimensional subspace of $\bb$ there exists a $C$-RIS $(x_i)_i$ of arbitrary length, 
arbitrary growth index $(j_i)_i$, and such that for every $i$ the vector $x_i$ is an $\ell^{n_{j_i}}_1$-average.
\end{lemma}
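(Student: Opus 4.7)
The plan is to build $(x_i)_i$ inductively, producing each $x_i$ as a normalised $C'$-$\ell_1^{n_{j_i}}$-average drawn from a sufficiently late tail of $Y$, for some fixed $C' \in (2, C)$. Assume $x_1, \dots, x_{i-1}$ have been constructed with $M := \max \rng x_{i-1}$, and fix any $j_i > M$. The tail subspace $Z_i := Y \cap \overline{\langle d_\gamma : \rank(\gamma) > M \rangle}$ has finite codimension in $Y$ and is therefore infinite-dimensional, so Lemma \ref{average-bound}(1) supplies a normalised $C'$-$\ell_1^{n_{j_i}}$-average $x_i \in Z_i$. Properties (1) and (2) of Definition \ref{RIS} follow immediately from $\|x_i\| = 1 \le C$ and $\min \rng x_i > j_i$, so the substance is property (3): for every $\gamma \in \Gamma$ with $m_\gamma < m_{j_i}$, $|x_i(\gamma)| \le C m_\gamma^{-1}$.

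I would establish (3) by a bottom-up induction on the tree analysis $(I_t, \varepsilon_t, \eta_t)_{t \in \T}$ of $e^*_\gamma$, proving at each node $t$ a bound of the form $|(P_{I_t} x_i)(\eta_t)| \le K m_t^{-1}$ for a constant $K \le C$. At a non-terminal $t$, the $I_t$-analysis of $e^*_{\eta_t}$ yields
\[
(P_{I_t} x_i)(\eta_t) = \sum_{s \in S_t} d^*_{\xi_s}(x_i) + m_t^{-1} \sum_{s \in S_t} \varepsilon_s (P_{I_s} x_i)(\eta_s),
\]
with $|S_t| \le n_t$. Each $|d^*_{\xi_s}(x_i)| \le 3C'/n_{j_i}$ by Lemma \ref{average-bound}(2), so the first sum contributes at most $3 C' n_t / n_{j_i}$. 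The second sum is handled by the inductive hypothesis on the children combined with Lemma \ref{norm-proj} (via Remark \ref{aveproj}), which replaces $\sum_s \|P_{I_s} x_i\|$ by $(1 + 2 n_t / n_{j_i}) \sup_s \|P_{I_s} x_i\|$. At a terminal node, $|(P_{I_t} x_i)(\eta_t)| \le \|P_{I_t} x_i\|$ is controlled directly via Remark \ref{aveproj}.

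The main obstacle is quantitative: the induction picks up, at each internal node, a multiplicative error $1 + 2 n_t / n_{j_i}$ and an additive error $3 C' n_t / n_{j_i}$, and these must telescope along each root-to-leaf branch to yield a final constant $K \le C$ independent of tree depth. Nodes with $m_t \ge m_{j_i}$ pose no difficulty, since there the bound $K m_t^{-1}$ is already smaller than what is needed. The delicate case is a long chain of nodes with $m_t < m_{j_i}$, which is precisely what the growth assumption $n_{j+1} \ge m_{j+1}^2 (4 n_j)^{\log_2 m_{j+1}}$ is engineered to control: it makes each local error of order $1 + O(m_{j_i}^{-2})$, so the product along the branch remains bounded and the sum of additive errors is $O(1/n_{j_i})$. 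Modulo this bookkeeping, which essentially mirrors the argument of Corollary 8.5 in \cite{AH}, property (3) follows and the lemma is proved.
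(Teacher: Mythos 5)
The paper states this lemma with a citation to Corollary 8.5 of \cite{AH} and does not reproduce its proof, so there is no argument of the paper's to compare against. Evaluated on its own terms, your sketch opens correctly (take a $C'$-$\ell_1^{n_{j_i}}$-average from a late enough tail, dispose of RIS conditions (1) and (2) by the choice of tail), but the inductive step for condition (3) is not coherent. Your invariant is $|(P_{I_t}x_i)(\eta_t)| \le K m_t^{-1}$, and you claim the mt-sum is handled ``by the inductive hypothesis on the children combined with Lemma \ref{norm-proj}''; these are mutually exclusive tools addressing different quantities. Summing the inductive bounds gives $\sum_{s}|(P_{I_s}x_i)(\eta_s)| \le K\sum_s m_s^{-1}$, which can be as large as $K n_t m_1^{-1}$ (the weights of the children of $\eta_t$ are unconstrained in the Bourgain--Delbaen tree) and destroys the invariant once multiplied by $m_t^{-1}$; Lemma \ref{norm-proj} instead bounds $\sum_s\|P_{I_s}x_i\|$, in which case the induction on children is superfluous and you instead need a uniform bound on $\sup_s\|P_{I_s}x_i\|$, which you never establish. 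Moreover, the depth of the tree analysis is governed by $\rank\gamma$ and not by $j_i$, so the multiplicative errors $(1+2n_t/n_{j_i})$ accumulate along branches of uncontrolled length, and the asserted ``telescoping'' is unjustified. You flag this yourself (``modulo this bookkeeping'') and appeal to the very argument of \cite{AH} that you are supposed to be reconstructing blind.

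The argument that actually closes needs only one pass through the evaluation analysis, not a tree induction. With $\w(\gamma)=m_k^{-1}$, $k<j_i$, write $e^*_\gamma=\sum_{s=1}^a d^*_{\xi_s}+m_k^{-1}\sum_{s=1}^a\varepsilon_s e^*_{\eta_s}P_{I_s}$ with $a\le n_k$; the bd-part is $\le 3C'n_k/n_{j_i}$ by Lemma \ref{average-bound}(2), and the mt-part is $\le m_k^{-1}\sum_s\|P_{I_s}x_i\| \le m_k^{-1}(1+2n_k/n_{j_i})\sup_s\|P_{I_s}x_i\|$ by Lemma \ref{norm-proj}. What is genuinely missing from your sketch is the uniform bound on the norms of the rank-interval projections $P_I$ in the $\Li$-space $\bb$; this is what makes $\sup_s\|P_{I_s}x_i\|$ a bounded multiple of $\|x_i\|$, and combined with the growth of $(n_j)$ (which makes $n_k/n_{j_i}$ negligible for $k<j_i$) it yields $|x_i(\gamma)|\le C m_k^{-1}$. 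The hypothesis $C>2$ is precisely there to absorb that projection constant, and without invoking it no amount of bookkeeping on the tree salvages the scheme.
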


Lemma \ref{er} and Proposition \ref{basic-ineq} imply the following
\begin{corollary}
Let $C>2$. 
In every infinite dimensional subspace of $\bb$ there exists a two-level $C$-RIS of arbitrary length.
\end{corollary}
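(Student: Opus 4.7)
The plan is to build the desired two-level $C$-RIS inductively from the inside out using Lemma \ref{er}. Given an infinite dimensional subspace $Y\subseteq \bb$ and a desired length $N\in\N$, I would construct $y_1,\ldots,y_N$ one at a time, together with their outer growth indices $j_1<j_2<\cdots<j_N$ chosen so that $j_{i+1}>\max\rng y_i$. At stage $i$, having fixed $j_i$, I would apply Lemma \ref{er} to $Y$ to extract a normalised $C$-RIS $(y_{i,j})_{j=1}^{n_{j_i}}$ consisting of $\ell_1^{n_{j_{i,j}}}$-averages, where the inner growth indices $(j_{i,j})_{j=1}^{n_{j_i}}$ are chosen large enough to ensure simultaneously $n_{j_i}<n_{j_{i,1}}<\cdots<n_{j_{i,n_{j_i}}}$, $\min\rng y_{i,1}>\max\rng y_{i-1}$, and the two-level spacing condition $\max\rng y_{i,j}+j_{i,j}<\min\rng y_{i,j+1}$ for each $j$. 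This is possible since Lemma \ref{er} permits both arbitrary length and arbitrary growth index. I would then set
\[
    y_i=c_i\frac{m_{j_i}}{n_{j_i}}\sum_{j=1}^{n_{j_i}}y_{i,j},
\]
with $c_i>0$ chosen so that $\|y_i\|=1$.

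Applying Proposition \ref{basic-ineq} to the inner $C$-RIS $(y_{i,j})_{j=1}^{n_{j_i}}$ of length exactly $n_{j_i}$ yields the upper bound $\|(m_{j_i}/n_{j_i})\sum_j y_{i,j}\|\leq 10C$, which forces $c_i\geq 1/(10C)$. The matching upper bound $c_i\leq 1$ is obtained by exhibiting a norm-one evaluating functional at which the unnormalised average attains value at least $1$, and together these recover the inequalities claimed in Corollary \ref{cis-bounds}.

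It then remains to check that $(y_i)_{i=1}^N$ is itself a normalised $C'$-RIS for a universal $C'$ depending only on $C$, which is the only non-trivial content of the statement. Normalisation is automatic, and the outer growth condition $j_{i+1}>\max\rng y_i$ is arranged by enlarging $j_{i+1}$ after $y_i$ has been defined. The crucial third RIS condition, namely $|y_i(\gamma)|\leq C'm_\gamma^{-1}$ whenever $m_\gamma<m_{j_i}$, splits according to the relative sizes of $m_\gamma$ and $m_{j_i}$: for $m_\gamma\geq m_{j_i}$ Proposition \ref{basic-ineq} directly gives $|y_i(\gamma)|\leq 10C/m_{j_i}\leq 10C/m_\gamma$, while for smaller weights one must expand $e^*_\gamma$ via its evaluation analysis into bd-part and mt-part, bound the bd-part by applying Lemma \ref{average-bound}(2) to each inner average $y_{i,j}$ (exploiting the huge sizes $n_{j_{i,j}}$ and Remark \ref{aveproj}), and bound the mt-part by iterating Proposition \ref{basic-ineq}.

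The main obstacle is precisely this low-weight pointwise estimate: a naive triangle inequality is too weak because it loses the factor $m_{j_i}$ appearing in the definition of $y_i$. The key technical ingredient is the extreme flexibility in choosing the inner growth indices $(j_{i,j})_j$ afforded by Lemma \ref{er}, which, combined with the Assumption growth conditions on $(m_j)_j,(n_j)_j$, absorbs the factor $m_{j_i}$ and produces a bound of the desired form $C'/m_\gamma$ with $C'$ independent of $i$.
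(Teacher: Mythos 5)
Your overall strategy — extract the inner $C$-RIS sequences of $\ell_1$-averages via Lemma \ref{er} with flexibly chosen growth indices, form the weighted sums $y_i$, normalise, and then verify the outer RIS conditions — is exactly the route the paper intends; the corollary appears in the text with only the one-line justification ``Lemma \ref{er} and Proposition \ref{basic-ineq} imply the following,'' so there is no written proof to compare against beyond that hint. The first two bullet points of your argument (existence of the inner data, and the bounds $1/(10C)\le c_i\le 1$ on the normalising constants) are fine and match what Corollary \ref{cis-bounds} records.

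The trouble is in the paragraph where you verify RIS condition (3) for the outer sequence. You propose to ``split according to the relative sizes of $m_\gamma$ and $m_{j_i}$'' and handle the case $m_\gamma\ge m_{j_i}$ via Proposition \ref{basic-ineq}. But condition (3) of Definition \ref{RIS} is a hypothesis only for $\gamma$ with $m_\gamma<m_{j_i}$; there is no second case to handle. Moreover, the chain $|y_i(\gamma)|\le 10C/m_{j_i}\le 10C/m_\gamma$ that you write in that (irrelevant) case runs in the wrong direction when $m_\gamma\ge m_{j_i}$. More seriously, the ``moreover'' part of Proposition \ref{basic-ineq} as stated only applies when $n_\gamma>n_{j_i}$, i.e.\ $m_\gamma>m_{j_i}$; it says nothing about $m_\gamma<m_{j_i}$, which is precisely the regime condition (3) requires. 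So the tool you invoke does not, as stated in the paper, close the gap you yourself correctly identify as ``the main obstacle.'' What actually does the work for $m_\gamma<m_{j_i}$ is the small-weight branch of the basic inequality in \cite{AH} (the fuller version of Proposition 5.6 there), where the decisive factor that absorbs $m_{j_i}$ is the $m_{j_i}^{-1}$ produced by the auxiliary Tsirelson estimate on $\frac{m_{j_i}}{n_{j_i}}\sum e_k$ — not merely the ``extreme flexibility'' of the inner indices $(j_{i,j})_j$, which controls only the $\bdp$-part via Lemma \ref{average-bound}(2). So: right skeleton, but the key estimate for the third RIS condition is not correctly sourced, and the proposed case analysis is spurious.
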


\section{Main Lemmas.}

\begin{definition}
Let $\T$ be a tree, $u\preceq v \in \T'$. 
We say that a sequence $(u_l)_{l=0}^r$ is a branch of length $r$ from $u$ to $v$, if
$u_0 = u$, $u_{l+1}\in S_{u_l},\ l=1,\dots,r-1$, and $u_r=v$.
\end{definition}

\begin{lemma}\label{lenpath}
Let $i\in \mathbb N$, $\T$ be a tree with the root equal $\emptyset$,
$v\succ\emptyset$, 
$b=(u_l)_{l=0}^r$ be a branch from $\emptyset$ to $v$.
\begin{enumerate}
   \item Let $\gamma \in \Gamma$,  $(I_t, \varepsilon_t, \eta_t)_{t\in\T}$ be its tree analysis,
         $y\in \bb$ be a normalised vector and assume that
         \begin{enumerate}
             \item $v$ is the covering index for $y$,
             \item for all $\xi\in\Gamma$ we have $|d^*_\xi(y)|<4^{-i-3}$,
             \item  $|e^*_\gamma(y)| > 4^{-i-1}$.
         \end{enumerate}
    Then $r \leq i+1$.
    \item Let $f\in W$, $(f_t)_{t\in\T}$ be its tree analysis, $z=e_l\in \mT$, for some $l\in\N$, and 
        assume that
        \begin{enumerate}
             \item $f_v=z^*_l$,
             \item $f(z) > 4^{-i-1}$.
         \end{enumerate}
        Then $r \leq i$.
\end{enumerate}
\end{lemma}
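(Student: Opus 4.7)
Both parts follow the same descent along the branch $b=(u_l)_{l=0}^r$. The growth assumption $m_1=n_1=4$ ensures $m_j^{-1},\theta_{n_j}\le 1/4$ for every $j$, so each step along the branch contributes a weight factor of at most $1/4$, and the total after $r$ steps is bounded by $4^{-r}$. Combined with the lower bound in hypothesis (c), this will force $r$ to be controlled by $i$.

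\textbf{Part (2)} is the cleaner case. For each non-terminal $u_l$ the tree analysis gives $f_{u_l}=\theta_{n_{u_l}}\sum_{s\in S_{u_l}}f_s$ with $(f_s)_{s\in S_{u_l}}$ a block sequence, so at most one child $s$ can evaluate nonzero on $z=e_l$. Since $f_v=\pm e^*_l$ is nonzero on $z$, tracing backwards forces $u_{l+1}$ to be this distinguished child for every $l<r$. Hence
\[
f(z)=\Bigl(\prod_{l=0}^{r-1}\theta_{n_{u_l}}\Bigr)f_v(z)=\pm\prod_{l=0}^{r-1}\theta_{n_{u_l}},
\]
so $|f(z)|\le 4^{-r}$; combined with $|f(z)|>4^{-i-1}$, we obtain $r\le i$.

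\textbf{Part (1)} follows the same blueprint, but the evaluation analysis now carries bd-parts, and the plan is to show that all intermediate bd-parts vanish so that only a terminal factor $|e^*_{\eta_v}(P_{I_v}y)|$ needs separate control. Concretely, I will establish, for each $l=0,\dots,r-1$, the recursive identity
\[
e^*_{\eta_{u_l}}(P_{I_{u_l}}y)=m_{u_l}^{-1}\,\varepsilon_l\,e^*_{\eta_{u_{l+1}}}(P_{I_{u_{l+1}}}y)
\]
for some sign $\varepsilon_l\in\{\pm 1\}$. Three ingredients drive this. First, the covering property $\rng y\cap\rng e^*_\gamma\subset I_v\subset I_{u_{l+1}}$ together with $I_{u_l}\subset\rng e^*_\gamma$ yields $P_{I_{u_l}}y=P_{I_{u_{l+1}}}y$. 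Second, in the evaluation analysis of $e^*_{\eta_{u_l}}$ each $\xi_i$ has $\rank\xi_i=\max I_i+1$, which by the gap rule $\min I_i=\max I_{i-1}+2$ lies strictly outside $I_{i^*}\supset I_{u_{l+1}}$ (where $i^*$ is the successor index leading to $u_{l+1}$); this kills the contribution $\bdp(e^*_{\eta_{u_l}})(P_{I_{u_{l+1}}}y)=0$ at every intermediate level. Third, for every sibling $i\ne i^*$ the interval $I_i\cap I_{u_l}$ is disjoint from $I_{u_{l+1}}$, so by the covering argument $P_{I_i\cap I_{u_l}}y=0$ and only the $i^*$-term survives in the mt-part.

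Iterating this identity from $l=0$ yields $e^*_\gamma(P_{I_\emptyset}y)=\pm\bigl(\prod_{l=0}^{r-1}m_{u_l}^{-1}\bigr)e^*_{\eta_v}(P_{I_v}y)$. The root-level discrepancy $e^*_\gamma(y)-e^*_\gamma(P_{I_\emptyset}y)$ reduces to the single coefficient $d^*_\gamma(y)$, bounded by $4^{-i-3}$ via hypothesis (b); meanwhile $|e^*_{\eta_v}(P_{I_v}y)|$ is controlled by the uniform projection-norm bound for the BD-basis $(d_\gamma)_{\gamma\in\Gamma}$ on $\bb$. Putting everything together gives an estimate of the form $|e^*_\gamma(y)|\le K\cdot 4^{-r}+4^{-i-3}$ with a uniform constant $K$, and the hypothesis $|e^*_\gamma(y)|>4^{-i-1}$ then forces $r\le i+1$ after elementary algebra. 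The main obstacle will be the rank-versus-interval bookkeeping behind the three ingredients above, in particular the gap-based vanishing of intermediate bd-parts, together with tracing the constant at the terminal node $v$ precisely enough to fit inside the $4^{-i-1}$ margin.
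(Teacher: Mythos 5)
Your part (2) is correct and coincides with the paper's, which treats (2) as (1) with the Bourgain--Delbaen parts omitted. For part (1) you take a genuinely different route. The paper unrolls $e^*_\gamma(y)$ along the branch $(u_l)_{l=0}^r$, \emph{retains} the bd-remainders $\sum_{s\in S_{u_l}}d^*_{\beta_s}(y)$ at every level $l<r$, observes that at most two such terms per level are nonzero, bounds each by $4^{-i-3}$ via hypothesis (b), and sums the resulting weighted geometric series against $\prod_{a} m_{u_a}^{-1}$ to get $|e^*_\gamma(y)|\le 4^{-i-2}+3\cdot 4^{-r}$, from which $r\le i+1$ follows. You instead aim for the stronger recursion $e^*_{\eta_{u_l}}(P_{I_{u_l}}y)=m_{u_l}^{-1}\varepsilon_l\,e^*_{\eta_{u_{l+1}}}(P_{I_{u_{l+1}}}y)$, i.e.\ that all intermediate bd-remainders vanish outright, so that (b) is needed only for the single coefficient $d^*_\gamma(y)$. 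That is a sharper observation and plausibly true: the relevant ranks $\rank\beta_s$ sit at the ``gaps'' of the evaluation analysis, inside $\rng e^*_\gamma$ yet outside $I_{u_{l+1}}\supset I_v$, and the covering hypothesis then excludes them from $\rng y$. But this is exactly the rank-versus-interval bookkeeping that you flag as the main obstacle and that the paper's blunter estimate sidesteps entirely.

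As written the plan has one concrete flaw, in ingredient (a). The inclusion $I_{u_l}\subset\rng e^*_\gamma$ is false in general: already $I_\emptyset=(0,\rank\gamma)$, and hence the nested $I_{u_l}$'s, may extend far below $\min\rng e^*_\gamma$, so $P_{I_{u_l}}y$ and $P_{I_{u_{l+1}}}y$ genuinely differ, and your recursion does not follow from what you state. The inclusion that does hold, and that your argument actually needs, is $\rng\bigl(e^*_{\eta_{u_l}}P_{I_{u_l}}\bigr)\subset\rng e^*_\gamma$; one proves it inductively from the disjointness of the bd pieces (ranks at gaps) and mt pieces (ranges in the $I_i$'s) in the evaluation analysis. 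Granting this, $e^*_{\eta_{u_l}}P_{I_{u_l}}$ annihilates the part of $y$ outside $\rng e^*_\gamma$, and the covering condition forces the surviving ranks $\rank\beta_s$ out of $\rng y$, killing the bd-part as you want. So your strategy is salvageable, and arguably tighter than the paper's, but the proposal stops short of a proof, and ingredient (a) needs this repair before the recursion is legitimate.
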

\begin{proof}
$(1)$.
Using the evaluation analysis and the assumption $u_r = v$ we have
\begin{align*}
    e^*_\gamma(y) 
        & = \sum_{s\in S_{u_0}} d^*_{\beta_s}(y) +
            \frac{1}{m_{u_0}}\sum_{s\in S_{u_0}}\varepsilon_s e^*_{\eta_s}P_{I_s}(y) \\
        & = \sum_{s\in S_{u_0}} d^*_{\beta_s}(y) +
            \frac{1}{m_{u_0}}\varepsilon_{\eta_{u_1}} e^*_{\eta_{u_1}}P_{I_{u_1}}(y) \\
        & = \sum_{s\in S_{u_0}} d^*_{\beta_s}(y) +
            \frac{\varepsilon_{\eta_{u_1}}}{m_{u_0}} \left( \sum_{s\in S_{u_1}} d^*_{\beta_s}(y) +
            \frac{1}{m_{u_1}}\sum_{s\in S_{u_1}}\varepsilon_{\eta_s}e^*_{\eta_s}P_{I_s}(y)) \right) \\
        & =  \sum_{l=0}^{r-1} \left( \prod_{a=0}^{l-1}\frac{\varepsilon_{u_{a+1}}}{m_{u_a}} \sum_{s\in S_{u_l}} d^*_{\beta_s}(y) \right) +
            \left( \prod_{a=0}^{r-1} \frac{\varepsilon_{u_{a+1}}}{m_{u_a}} \right)
            e^*_{\eta_{u_r}}P_{I_{u_r}}(y). 
\end{align*}
Moreover, the assumption $u_r = v$ implies also that for at most one $l\in\{\,0,\dots,r-1\,\}$ there are two $s \in S_{u_l}$
such that $d^*_{\beta_s}(y) \neq 0$ and for the rest $l$'s from  $\{\,0,\dots,r-1\,\}$ there are at most one such $s$. 
Hence by $(c)$ we have
\[
     4^{-i-1} < |e^*_\gamma(y)| \leq 2\cdot 4^{-i-3} \sum_{l=0}^{r-1} \left(
    \prod_{a=0}^{l-1}\frac{1}{m_{u_a}}  \right) +
            3\left( \prod_{a=0}^{r-1} \frac{1}{m_{u_a}} \right)
            \leq 4^{-i-2} + 3\cdot 4^{-r}.
\]
It follows that
\[
   3\cdot 4^{-r} \geq 4^{-i-1} - 4^{-i-2} > 4^{-i-2},
\]
which finishes the proof of $(1)$.

$(2)$.
The second part can be proved in a similar manner as (1). 
Indeed, it is enough to repeat the arguments omitting the Bourgain-Delbaen parts of functionals.
\end{proof}

In translating $\bb$ to $\mT$ we need to deal with bd-parts.
The following lemma solves one of the frequently occurring situations.
\begin{lemma}\label{est-bd}
Let $(y_i)$ be a two-level $C$-RIS, $(a_i)$ a sequence of scalars of modulus bounded by $1$, 
$\gamma\in\Gamma$ with the tree analysis $(I_t, \varepsilon_t, \eta_t)_{t\in\T}$ 
satisfying $|e^*_\gamma(y_i)| > 4^{-i-1}$. 
For every $i$ let $t_i$ be the covering index for $y_i$ and let $\T'$ be the minimal subtree of $\T$ containing the root and all $t_i'$s.
If $n_{t_i}\leq n_{j_i}$ for every $i$, then
\[
    |\bdp|(e^*_\gamma, \T')(\sum_i a_iy_i) \leq 1/n_{j_1}.
\]
\end{lemma}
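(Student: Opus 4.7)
The strategy is to bound $|\bdp|(e^*_\gamma,\T')(y_i)$ for each $i$ separately and sum, exploiting the super-exponential growth of $(n_j)$. Since $|\bdp|(e^*_\gamma,\T')=\sum_{t\in\T\cap\T'}|\bdp|(e^*_{\eta_t})\circ P_{I_t}$ is a sum of absolute values of coordinate functionals, it is sublinear; together with $|a_i|\le 1$ this gives
\[
|\bdp|(e^*_\gamma,\T')\bigl(\textstyle\sum_i a_iy_i\bigr)\le\sum_i|\bdp|(e^*_\gamma,\T')(y_i).
\]
I would then fix $i$ and aim to show the $i$-th summand is at most $C'(i+2)m_{j_i}/n_{j_{i,1}}$ for some absolute $C'$; the Assumption on $(n_j)$ forces the whole sum to be dominated by its first term, comfortably below $1/n_{j_1}$.

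For the per-$i$ bound I would first use the two-level decomposition $y_i=\frac{c_im_{j_i}}{n_{j_i}}\sum_{j=1}^{n_{j_i}}y_{i,j}$ with $c_i\le 1$ (Corollary \ref{cis-bounds}). Since the $y_{i,j}$ are disjointly supported and each is a normalised $C$-$\ell_1^{n_{j_{i,j}}}$-average, Lemma \ref{average-bound}(2) yields the pointwise estimate
\[
|d^*_\xi(y_i)|\le\frac{c_im_{j_i}}{n_{j_i}}\cdot\frac{3C}{n_{j_{i,j(\xi)}}}\qquad(\xi\in\Gamma),
\]
where $j(\xi)$ is the unique $j$ with $\xi\in\rng y_{i,j}$ (and $d^*_\xi(y_i)=0$ otherwise). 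Using $n_{j_{i,j}}>n_{j_i}$ together with the growth Assumption, this is much smaller than $4^{-i-3}$. This verifies the hypothesis of Lemma \ref{lenpath}(1), which combined with the given $|e^*_\gamma(y_i)|>4^{-i-1}$ bounds the depth of $t_i$ in $\T$ (hence in $\T'$) by $i+1$.

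I would then split $\T'$ into the branch from $\emptyset$ to $t_i$ and its complement. For the terminal node $t=t_i$, grouping the evaluation-analysis nodes $\xi_s^{t_i}$ by $j(\xi_s^{t_i})$ with multiplicities $n_{t_i,j}=\#\{s:j(\xi_s^{t_i})=j\}$ satisfying $\sum_j n_{t_i,j}\le n_{t_i}\le n_{j_i}$, and using the strict ordering $n_{j_i}<n_{j_{i,1}}<\dots<n_{j_{i,n_{j_i}}}$,
\[
\sum_s|d^*_{\xi_s^{t_i}}(y_i)|\le \frac{3Cc_im_{j_i}}{n_{j_i}}\sum_j\frac{n_{t_i,j}}{n_{j_{i,j}}}\le\frac{3Cc_im_{j_i}}{n_{j_i}}\cdot\frac{n_{t_i}}{n_{j_{i,1}}}\le\frac{3Cm_{j_i}}{n_{j_{i,1}}}.
\]
For each of the at most $i+1$ strict ancestors $t$ of $t_i$ in $\T$, the maximality in the definition of the covering index forces all but one child of $t$ in the tree analysis to produce an interval $I_s$ disjoint from $\rng y_i\cap\rng e^*_\gamma\subseteq I_{t_i}$; the ``off-path'' nodes $\xi_s^t$ can then meet $\rng y_i$ only through the portion $\rng y_i\setminus I_{t_i}$, which by the block structure of the two-level RIS contributes at most one bounded term per ancestor, again of size $O(m_{j_i}/n_{j_{i,1}})$. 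Nodes of $\T'$ off the branch to $t_i$ are handled identically because their $I_t$ is disjoint from $I_{t_i}$. Summing over the branch gives $|\bdp|(e^*_\gamma,\T')(y_i)\le C'(i+2)m_{j_i}/n_{j_{i,1}}$, and summing over $i$ closes the argument by the super-exponential separation imposed by the Assumption.

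The main obstacle is the treatment of strict ancestors of $t_i$: naively an ancestor $t$ could contribute $n_t$ evaluation-analysis terms, and $n_t$ is not constrained by the hypothesis. The key point is to exploit the maximality in the covering-index definition — at each ancestor exactly one child lies on the branch to $t_i$, so every other child yields an interval disjoint from $I_{t_i}$ and can only see the narrow ``boundary'' piece $\rng y_i\setminus I_{t_i}$ of the RIS vector. Making this tree/block bookkeeping rigorous, while simultaneously tracking the dependence on $i$ through Lemma \ref{lenpath}(1), is the delicate step; the super-exponential growth $n_{j+1}\geq m_{j+1}^2(4n_j)^{\log_2 m_{j+1}}$ then effortlessly closes the final summation at $1/n_{j_1}$.
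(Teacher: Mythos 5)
Your proposal follows essentially the same route as the paper: sublinearity reduces to bounding $\sum_i |\bdp|(e^*_\gamma,\T')(y_i)$; Lemma \ref{average-bound}(2) applied to the $\ell_1$-averages $y_{i,j}$ inside the two-level decomposition gives the pointwise bound on $|d^*_\xi(y_i)|$; Lemma \ref{lenpath}(1) caps the branch length at $i+1$; and one sums the branch contributions plus the $n_{t_i}\le n_{j_i}$ terms at $t_i$, closing with the super-exponential growth of $(n_j)$. The paper streamlines two of your steps — it uses the single uniform bound $|d^*_\xi(y_i)|\le \frac{m_{j_i}}{n_{j_i}}\frac{3C}{n_{j_{i,1}}}$ rather than grouping by $j(\xi)$, and it simply counts at most two $d^*_{\beta_s}$ per ancestor level (your ``one bounded term per ancestor'' slightly undercounts, harmlessly) — arriving at the sharper per-$i$ bound $\frac{1}{2n_{j_i}}$, but the underlying argument is identical.
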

\begin{proof}
Fix $i$ and let $(u_l)_{l=0}^r\subset \T$ be a branch from $\emptyset$ to $t_i$.
The definition of a two-level RIS and Lemma \ref{average-bound} gives for every $i$ and every $\xi\in\Gamma$
that 
\[
    |d^*_\xi(y_i)| \leq \frac{m_{j_i}}{n_{j_i}}\frac{3C}{n_{j_{i,1}}}.
\]
By Lemma \ref{lenpath} we know that $r\leq i+1$, so we have
\begin{align*}
    |\bdp(e^*_\gamma,\T')|(y_i) 
    & = \sum_{l=0}^{r-1}  \sum_{s\in S_{u_l}} |d^*_{\beta_s}(y_i)| +
     \sum_{s\in S_{t_i}}|d^*_{\beta_s}(y_i)| \\
     & \leq \sum_{l=0}^i 2\frac{m_{j_i}}{n_{j_i}}\frac{3C}{n_{j_{i,1}}} +
     n_{j_i}\frac{m_{j_i}}{n_{j_i}}\frac{3C}{n_{j_{i,1}}} 
      \leq \frac{3C}{n_{j_{i,1}}} \left(2(i+1)\frac{m_{j_i}}{n_{j_i}} + m_{j_i}\right) \\
      & \leq \frac{1}{2n_{j_i}}.
\end{align*}
Summing for all $i$'s and using $|a_i|\leq 1$ we obtain the statement.
\end{proof}

\begin{lemma}[Lemma 5.2 \cite{AH}] \label{gonRIS}
Let $I\subseteq\N$ be an interval, $(x_i)_{i\in I}$ be a $C$-RIS, $(j_i)_{i\in I}$ be its growth index.
For every $\gamma \in \Gamma $ and $s \in \N$ we have
\[
    |e^*_\gamma P_{(s,\infty)}(x_i)| \leq
    \begin{cases}
        5Cm_\gamma^{-1} & \text{, if }  m_\gamma < m_{j_i}, \\
        3Cm_\gamma^{-1} & \text{, if }  m_\gamma \geq m_{j_{i+1}}
    \end{cases}
\]
\end{lemma}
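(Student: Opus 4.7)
The plan is to decompose $e^*_\gamma P_{(s,\infty)}$ via the evaluation analysis of $\gamma$ and handle the two cases separately, extracting different RIS properties in each.

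\textbf{Setup.} Write $\w(\gamma)=m_j^{-1}$ and let $(I_k,\varepsilon_k e^*_{\eta_k},\xi_k)_{k=1}^a$ be the evaluation analysis of $\gamma$ (so $a\leq n_j$). This yields the decomposition
\[
e^*_\gamma P_{(s,\infty)}(x_i) = \sum_{\rank\xi_k>s} d^*_{\xi_k}(x_i) + m_j^{-1}\sum_{k=1}^{a}\varepsilon_k\,e^*_{\eta_k}\bigl(P_{I_k\cap(s,\infty)}\,x_i\bigr).
\]

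\textbf{Case 1} ($m_\gamma<m_{j_i}$). I would split $e^*_\gamma P_{(s,\infty)} = e^*_\gamma - e^*_\gamma P_{(0,s]}$. Property (3) of $C$-RIS immediately gives $|e^*_\gamma(x_i)|=|x_i(\gamma)|\leq C/m_j$. For the second piece, let $k_0$ be the unique index (if it exists) with $s\in I_{k_0}$; then inspection of the evaluation analysis produces the identity
\[
e^*_\gamma P_{(0,s]} = e^*_{\xi_{k_0-1}} + m_j^{-1}\varepsilon_{k_0}\,P^*_{I_{k_0}\cap(0,s]}e^*_{\eta_{k_0}},
\]
where $\xi_{k_0-1}$ is the age-$(k_0-1)$ predecessor in the evaluation chain and still has weight $m_j^{-1}$. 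Property (3) applied to $\xi_{k_0-1}$ gives $|e^*_{\xi_{k_0-1}}(x_i)|\leq C/m_j$, and the boundary term is bounded by $m_j^{-1}\|x_i\|$ up to the bimonotonicity constant of the BD-basis. Adding these contributions yields $|e^*_\gamma P_{(s,\infty)}(x_i)|\leq 5C/m_j$. The edge cases where $s$ lies in a gap between the $I_k$'s, or outside them entirely, are strictly easier since the boundary correction disappears.

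\textbf{Case 2} ($m_\gamma\geq m_{j_{i+1}}$). The key observation is that every $\xi_k$ in the evaluation analysis of $\gamma$ has $\w(\xi_k)=m_j^{-1}$, and by the definition of $\Delta_{q+1}$ any node of weight $m_j^{-1}$ satisfies $\rank\xi_k\geq j$. Combined with property (2),
\[
\rank\xi_k\geq j\geq j_{i+1}>\max\rng x_i,
\]
so $d^*_{\xi_k}(x_i)=0$ for every $k$ and the entire bd-part of $e^*_\gamma P_{(s,\infty)}(x_i)$ vanishes. We are reduced to bounding
\[
m_j^{-1}\bigg|\sum_{k=1}^a\varepsilon_k\,e^*_{\eta_k}\bigl(P_{I_k\cap(s,\infty)\cap\rng x_i}\,x_i\bigr)\bigg|.
\]
The intervals $I_k\cap(s,\infty)\cap\rng x_i$ are pairwise disjoint subintervals of $(s,\max\rng x_i]$, and at most two of them are proper restrictions of the corresponding $I_k$ (one on each side). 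Using $\|e^*_{\eta_k}\|_{\bb^*}\leq 1$ together with the bimonotonicity of the BD-basis to control the boundary truncations, the inner sum is controlled by a multiple $3C$ of $\|x_i\|$, yielding the desired $3C/m_j$ bound.

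\textbf{Main obstacle.} In case 1 the work is purely algebraic: recognizing $e^*_{\xi_{k_0-1}}$ inside $e^*_\gamma P_{(0,s]}$ and then applying property (3) is the whole content. The harder case is case 2: once the bd-part is seen to vanish, one must still bound the mt-part uniformly by a fixed multiple of $C$ rather than by a multiple growing with $a$ or with the number of nontrivial summands. This forces one to exploit the disjointness of the $I_k\cap\rng x_i$ together with the bimonotone BD-basis estimates on partial projections, rather than estimating each term individually.
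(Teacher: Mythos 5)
Your treatment of Case 1 is essentially correct and matches the [AH] argument: splitting $e^*_\gamma P_{(s,\infty)} = e^*_\gamma - e^*_\gamma P_{(0,s]}$, recognising $e^*_\gamma P_{(0,s]}$ as $e^*_{\xi_{k_0-1}}$ plus one truncated mt-term, and invoking RIS property $(3)$ twice (for $\gamma$ and for $\xi_{k_0-1}$, both of weight $m_\gamma^{-1}<m_{j_i}^{-1}$) together with the projection constant of the BD basis.

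Case 2 has a genuine gap. After correctly observing that every $\xi_k$ has weight $m_\gamma^{-1}$, hence $\rank \xi_k \geq j \geq j_{i+1} > \max\rng x_i$ and the bd-part vanishes, you try to bound
\[
m_\gamma^{-1}\Bigl|\sum_{k=1}^{a}\varepsilon_k\, e^*_{\eta_k}\bigl(P_{I_k\cap(s,\infty)\cap\rng x_i}\,x_i\bigr)\Bigr|
\]
by claiming the inner sum is controlled by a fixed multiple of $\|x_i\|$ using disjointness and bimonotonicity. This does not work: the BD basis is not unconditional, so for pairwise disjoint intervals $J_k\subseteq I_k$ the best general bounds on $\sum_k |e^*_{\eta_k}P_{J_k}(x_i)|$ are either a multiple of the \emph{number} of nonzero terms times $\|x_i\|$, or — via telescoping through the $e^*_{\xi_k}$ — a bound of order $m_\gamma\|x_i\|$. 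After multiplying by $m_\gamma^{-1}$ one gets at best $\mathcal{O}(C)$, not $\mathcal{O}(Cm_\gamma^{-1})$. The missing ingredient is a further consequence of the same rank inequality you already isolated: since $\rank\xi_1 = \max I_1 + 1 \geq j \geq j_{i+1} > \max\rng x_i$, one has $\max I_1 \geq \max\rng x_i$, and since $\min I_2 = \max I_1 + 2$, it follows that $I_k \cap \rng x_i = \emptyset$ for every $k \geq 2$. Thus $\rng x_i \subseteq I_1$ and the mt-part collapses to the single term
\[
m_\gamma^{-1}\varepsilon_1\, e^*_{\eta_1}\bigl(P_{(s,\infty)}\,x_i\bigr),
\]
which is bounded by $m_\gamma^{-1}\|P_{(s,\infty)}x_i\|\leq 3Cm_\gamma^{-1}$. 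It is this reduction to one summand — not a disjointness estimate over many summands — that produces the $m_\gamma^{-1}$ decay.
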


The following Proposition describes a possibility of approximation of a given node by a node with controlled rank.
\begin{proposition}\label{split}
Let $\gamma \in \Gamma$, $K,N\in \N$, $K\leq\min\rng e^*_\gamma \leq N$.
For every $i\in \N$ there exists $\gamma'\in \Gamma_{N+i}$ such that $K\leq\min\rng e^*_{\gamma'}$ and
\[
    \|(e^*_\gamma-e^*_{\gamma'})\restriction \langle d_\xi \mid \xi \in \Gamma_N \rangle\| \leq 4^{-i}.
\]
\end{proposition}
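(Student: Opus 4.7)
My plan is to prove the proposition by induction on $i$, using the evaluation analysis of $\gamma$ to perform a single-level truncation at each step. The factor $m_j^{-1}\leq 1/4$ coming from the outermost node of $\gamma$ will drive the geometric decay $4^{-i}$.

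For the base case $i=1$: if $\rank\gamma\leq N+1$ set $\gamma'=\gamma$. Otherwise, let $(I_l,\varepsilon_l e^*_{\eta_l},\xi_l)_{l=1}^{a}$ be the evaluation analysis of $\gamma$ with $\w(\gamma)=m_j^{-1}$ and let $k=\max\{l:\rank\xi_l\leq N\}$ (treating the degenerate case $k=0$ by a separate construction of a node of the form $(N+1,0,m_j,\varepsilon_1 e^*_{\eta'})$). Set $\gamma'=\xi_k$. From the formula
\[
e^*_\gamma-e^*_{\xi_k}=\sum_{l=k+1}^{a}d^*_{\xi_l}+m_j^{-1}\sum_{l=k+1}^{a}\varepsilon_l P^*_{I_l}e^*_{\eta_l},
\]
together with the observations that $\rank\xi_l>N$ for $l>k$ (killing the bd-part on $\Gamma_N$) and that $\min I_l=\rank\xi_{l-1}+1>N$ for $l\geq k+2$ (killing the remaining mt-terms on $\Gamma_N$), only $m_j^{-1}\varepsilon_{k+1}P^*_{I_{k+1}\cap[1,N]}e^*_{\eta_{k+1}}$ survives on $\langle d_\xi:\xi\in\Gamma_N\rangle$, with operator norm at most $m_j^{-1}\leq 4^{-1}$. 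The containment $\rng e^*_{\xi_k}\subseteq\rng e^*_\gamma$ then yields $\min\rng e^*_{\xi_k}\geq K$.

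For the inductive step, with $\rank\gamma>N+i$, the same setup applies. Rather than simply truncating at $\xi_k$, I apply the inductive hypothesis at level $i-1$ to $\eta_{k+1}$ with parameters $K'=\rank\xi_k+1$ and $N'=N$ to produce $\eta'\in\Gamma_{N+i-1}$ with $\min\rng e^*_{\eta'}\geq K'$ and $\|(e^*_{\eta_{k+1}}-e^*_{\eta'})\restriction\langle d_\xi:\xi\in\Gamma_N\rangle\|\leq 4^{-(i-1)}$, and define
\[
\gamma'=(N+i,\xi_k,m_j,\varepsilon_{k+1}e^*_{\eta'}).
\]
Node validity is immediate: $\age(\xi_k)=k<n_j$ since $a\leq n_j$, the weight $m_j$ matches that of $\xi_k$, and $\eta'\in\Gamma_{N+i-1}\setminus\Gamma_{\rank\xi_k}$ from the range condition in the IH. Cancelling $e^*_{\xi_k}$-parts and killing $d^*$-parts on $\Gamma_N$ as before, the only remaining contribution to $e^*_\gamma-e^*_{\gamma'}$ is $m_j^{-1}\varepsilon_{k+1}P^*_{(\rank\xi_k,N]}(e^*_{\eta_{k+1}}-e^*_{\eta'})$, with operator norm at most $m_j^{-1}\cdot 4^{-(i-1)}\leq 4^{-i}$.

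The main obstacle is the well-definedness of the recursion: the IH requires $K'\leq\min\rng e^*_{\eta_{k+1}}\leq N'$, but $e^*_{\eta_{k+1}}$ may carry low-rank support coming from its own bd- and c-parts, so its range need not lie above $\rank\xi_k+1$. The clean way around this is either to strengthen the inductive statement so that it targets the \emph{projected} functional $P^*_{I_{k+1}}e^*_{\eta_{k+1}}$ (whose range is by construction confined to $I_{k+1}$), or to carry through an additional parameter encoding the lower-rank bound imposed by the ambient projection from the outer node. A subsidiary technical point is to verify that composition with $P^*_{(\rank\xi_k,N]}$ in the final error estimate does not inflate the $4^{-(i-1)}$ bound beyond the slack provided by the $m_j^{-1}\leq 1/4$ factor.
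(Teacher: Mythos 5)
Your approach — descend into the evaluation analysis at the level straddling $N$, truncate, and recurse — is the same underlying idea as the paper's proof, which traces the corresponding path of length $i+1$ in the tree analysis and rebuilds the chain of nodes by re-ranking. But beyond the gap you flagged yourself, there is a quantitative problem that your sketched fix does not yet address.

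The unjustified step is the assertion that the surviving term $m_j^{-1}\varepsilon_{k+1}P^*_{I_{k+1}\cap[1,N]}e^*_{\eta_{k+1}}$ has norm at most $m_j^{-1}$ on $\langle d_\xi:\xi\in\Gamma_N\rangle$. That would require $\|P^*_{I_{k+1}\cap[1,N]}e^*_{\eta_{k+1}}\restriction\langle d_\xi:\xi\in\Gamma_N\rangle\|\leq 1$, which is false in general: the projected coordinate functionals satisfy a bound like $\|e^*_{\eta}P_{I}\|\leq 3$ (this is exactly the constant appearing in the paper's final estimate). So your base case gives $3m_j^{-1}\leq 3/4$, not $4^{-1}$. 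The same constant reappears in the inductive step: from the IH you get a bound on $\|(e^*_{\eta_{k+1}}-e^*_{\eta'})\restriction\langle d_\xi:\xi\in\Gamma_N\rangle\|$, but you then post-compose with $P^*_{(\rank\xi_k,N]}$, whose norm exceeds $1$, so the factor compounds at each level and the recursion does not close at $4^{-i}$ even if the IH were applicable. The paper avoids both issues simultaneously: it keeps, rather than drops, the straddling node when its (re-assigned) rank fits into $\Gamma_{N+i}$ — in which case the restriction to $\Gamma_N$ is unchanged and the error is exactly zero — and only at depth $i+1$ does it drop a node, incurring the constant $3$ exactly once. The product of $i+1$ factors of $m^{-1}\leq 1/4$ then absorbs the $3$: $3\cdot4^{-i-1}<4^{-i}$. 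Your recursion effectively descends only $i$ levels, which is one short.

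As for the gap you noticed concerning $K'\leq\min\rng e^*_{\eta_{k+1}}$: you are right that this hypothesis can fail, since $e^*_{\eta_{k+1}}$ may have support strictly below $\min I_{k+1}$. Your proposed remedy — strengthen the statement to target $P^*_{(p,N]}(e^*_\gamma-e^*_{\gamma'})$ for a parameter $p$ encoding the ambient projection — is the right idea, and it would also dispose of the compounding projection constants since the projection would then be built into what is being estimated rather than applied on top of the IH bound. But that reformulated statement, together with the accompanying node-validity bookkeeping (you still need $\rank\gamma'>p$ and the age/weight compatibility for the rebuilt chain), has to be carried through the whole induction; as written this remains a plan rather than a proof. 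The paper sidesteps the whole question by working directly with the tree analysis, where the $P_{I_t}$'s are already part of the data and the path $u_0,\dots,u_{i+1}$ is chosen by $\min\rng(e^*_{\eta_s}P_{I_s})\leq N$, so the projected functional is the native object throughout.
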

\begin{proof}
If $\gamma\in \Gamma_{N+i}$, then $\gamma'=\gamma$ satisfies the conditions of the Lemma.
Assume that $\gamma\in \Gamma\setminus\Gamma_{N+i}$.
Consider the tree analysis  $(I_t, \varepsilon_t, \eta_t)_{t\in\T}$ of $e^*_\gamma$
and for every $t\in\T$ let $\bdp(e^*_{\eta_t})P_{I_t}=\sum_{s\in S_t} d^*_{\beta_s}$.

Define $u_0 = \emptyset$ and 
$u_1 = \max \{s\in S_{u_0} \mid \min\rng (e^*_{\eta_s}P_{I_s}) \leq N\}$.
If $\rank (d^*_{\beta_{u_1}})  \leq N + i$, then we define 
$\eta_{u_0}' = \beta_{u_1}$ and we are done.

Assume that $\rank (d^*_{\beta_{u_1}})  > N + i$ and
define $u_2 = \max \{s\in S_{u_1} \mid \min\rng (e^*_{\eta_s}P_{I_s}) \leq N\}$.
If $\rank (d^*_{\beta_{u_2}}) + 1 \leq N + i$, then we define
$\eta_{u_1}' = \beta_{u_2}$ and 
$\eta_{u_0}' = (\rank(\eta_{u_1}')+1,\beta_{u_1-},m_{u_0},\varepsilon_{u_1} e^*_{\eta_{u_1}'})$.

We continue inductively for $k\leq i+1$.
If at some point $k$ we have $\max\rng (e^*_{\eta_{u_k}}P_{I_{u_k}}) + k - 1  \leq N + i$, then we 
redefine all $\eta_{u_k},\dots,\eta_{u_0}$ as above and we are done.

If $k=i+1$ and still $\rank (\beta_{u_k}) + k - 1 > N + i$, then
we drop $\eta_{u_k}$ from $\eta_{u_{k-1}}$ redefining 
$\eta_{u_{k-1}}'=(N+1,\beta_{u_k-{}-},m_{u_{k-1}},\varepsilon_{u_k-} e^*_{\eta_{u_k-}})$ (we change only the rank of $\beta_{u_k-}$)
and for all $l=k-2,\dots,0$ we redefine
$\eta_{u_l}' = (\rank(\eta_{u_{l+1}}')+1,\beta_{u_{l+1}-},m_{u_l},\varepsilon_{u_{l+1}} e^*_{\eta_{u_{l+1}}'})$.

We show that for $\gamma'=\eta_\emptyset'$ we have the claimed bound for the norm of the difference.
If we finished the inductive construction without dropping $\eta_{u_{i+1}}$, then 
$e^*_{\gamma}\restriction \langle d_\xi \mid \xi \in \Gamma_N \rangle =  
    e^*_{\gamma'}\restriction \langle d_\xi \mid \xi \in \Gamma_N \rangle$.
If we dropped $\eta_{u_{i+1}}$, then
\[
    \|(e^*_{\gamma}-e^*_{\gamma'})\restriction \langle d_\xi \mid \xi \in \Gamma_N \rangle\| \leq 
        (\prod_{k=0}^i m_{\eta_{u_l}})\|e^*_{\eta_{u_k}}P_{I_{u_k}}\| \leq  3\cdot 4^{-i-1}.
\]
\end{proof}

\begin{lemma}\label{comp}
Let $I\subseteq\N$ be an interval and let $(x_i)_{i\in I}$ be a $C$-RIS with
the growth index $(j_i)_{i\in I}$, $\max\rng x_i + j_i < \min\rng x_{i+1}$, and 
$|d^*_\xi(x_i)| \leq 3Cn_{j_i}^{-1}$ for all $\xi\in\Gamma$ and $i\in I$. 
For every node $\gamma\in\Gamma$ there exist $I' \subset I$, a sequence of intervals $(E_i)_{i\in I'}$, and a node $\gamma'\in \Gamma$ 
with the tree analysis comparable to $(x'_i)_{i\in I'}$, where $x'_i=P_{E_i}x_i$, and
\[
    |e^*_\gamma(\sum_{i\in I} x_i)|\leq 6|e^*_{\gamma'}(\sum_{i\in I'} x'_i)| + 49C4^{-j_1}.
\]
\end{lemma}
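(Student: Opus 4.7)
The plan is to walk through the tree analysis $(I_t, \varepsilon_t, \eta_t)_{t \in \T}$ of $e^*_\gamma$ and repair every place where it fails to be comparable with $(x_i)_{i \in I}$, using Proposition \ref{split} to push interval endpoints into the gaps between consecutive $x_i$'s. After the modification, the resulting node $\gamma' \in \Gamma$ will have a tree analysis genuinely comparable with a suitable restricted block sequence $(x'_i)_{i \in I'}$, and the cost of all the replacements will aggregate into the claimed $49C4^{-j_1}$ error.

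First I would identify, for each $i \in I$, the covering index $t_i \in \T$. Non-comparability can only occur at nodes $t \in \T$ one of whose successors $s \in S_t$ has $\rng e^*_{\eta_s}P_{I_s}$ straddling the boundary of some $\rng x_i$; at most two bad successors appear per $x_i$ (one at the left boundary of $\rng x_i$ and one at the right). For each such bad boundary between consecutive $x_i, x_{i+1}$, I would invoke Proposition \ref{split} on the straddling node with rank bound $N = \max \rng x_i$ and shift parameter $j_i$. The gap assumption $\max \rng x_i + j_i < \min \rng x_{i+1}$ guarantees that the replacement node lies in $\Gamma_{N+j_i}$, safely inside the gap between $\rng x_i$ and $\rng x_{i+1}$, at the cost of operator-norm error at most $4^{-j_i}$ on $\langle d_\xi \mid \xi \in \Gamma_N\rangle$. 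Gluing these local replacements back into a single well-formed node through Remark \ref{mt-part} produces $\gamma' \in \Gamma$; I would then let $I' \subseteq I$ consist of the indices still cleanly covered by the tree of $\gamma'$ and set $E_i = \rng x_i \cap \rng e^*_{\eta'_{t'_i}} P_{I'_{t'_i}}$ for the modified covering indices.

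To establish the inequality, I would split $e^*_\gamma\bigl(\sum_{i \in I} x_i\bigr)$ into a comparable main part and a boundary remainder. The main part differs from $e^*_{\gamma'}\bigl(\sum_{i \in I'} x'_i\bigr)$ only through the accumulated approximation errors, each contributing at most $\|x_i\|\cdot 4^{-j_i} \leq C\, 4^{-j_i}$ by Proposition \ref{split}; summed as a geometric series this yields $\sum_i C\, 4^{-j_i} \leq \tfrac{4C}{3}\cdot 4^{-j_1}$, comfortably inside $49C4^{-j_1}$. The boundary contributions (those from $x_i$'s whose partial fragments are lost when cleaning the tree) are handled by Lemma \ref{gonRIS} together with the pointwise bound $|d^*_\xi(x_i)| \leq 3C n_{j_i}^{-1}$, which lets them be re-expressed as $|e^*_{\gamma'}(x'_i)|$ plus a further geometric tail; the factor of $6$ comes from the at-most-two-boundary structure combined with the $3C m_\gamma^{-1}$ constants produced by Lemma \ref{gonRIS}.

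The main obstacle will be verifying that the single-node modifications compose into a globally valid node $\gamma' \in \Gamma$ whose tree analysis really is comparable with $(x'_i)_{i \in I'}$. Each application of Proposition \ref{split} alters the rank of an $\eta_t$ and therefore cascades through the ancestors of $t$, so the construction must be run in the correct order (most naturally from the leaves toward the root) while maintaining as invariants that the partially built $\gamma'$ lies in $\Gamma$ and that the recorded errors still telescope into the geometric series in $4^{-j_i}$. Keeping track of these two bookkeeping tasks simultaneously -- structural admissibility and quantitative error control -- is where the growth conditions on $(m_j),(n_j)$ and the spacing $\max \rng x_i + j_i < \min \rng x_{i+1}$ have to be exploited most carefully.
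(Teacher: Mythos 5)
Your plan captures the broad outline (repair non-comparability with Proposition \ref{split}, keep modified nodes inside the rank gaps, track errors), but it misses several devices the paper's proof actually depends on, and the error analysis as sketched would not close. Most importantly, you do not explain where the multiplicative loss (the factor $6$) enters. In the paper $I'$ is defined by thresholding, $I' = \{i \mid \varepsilon e^*_\gamma(x_i) > 4^{-j_i-1}\}$, not as ``indices still cleanly covered after the modification''; this thresholding is what, via Lemma \ref{lenpath}, gives the branch-length bound $(T2)$ and, together with Lemma \ref{gonRIS}, the crucial property $(T3)$ that for $i \in I'$ every $t \preceq t_i$ satisfies $n_t \leq n_{j_i}$, which in turn controls all the bd-part contributions. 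Your $I'$ is defined after the fact and doesn't yield these properties. Second, before any node is modified the paper first replaces $x_i$ by $x'_i = P_{E_i}x_i$ by deleting one side of $x_i$ in the ``double-straddle'' case; this pre-cleaning simultaneously (a) costs a factor $2$ while guaranteeing $\varepsilon e^*_\gamma(x_i) \leq 2\varepsilon e^*_\gamma(x'_i)$ and (b) ensures property $(T1)$ — that the covering node of $x'_i$ has at least one successor with range strictly inside $\rng x'_i$ — without which there is no ``anchor'' on which to glue the replacement nodes via Remark \ref{mt-part}. Your choice $E_i = \rng x_i \cap \rng e^*_{\eta'_{t'_i}} P_{I'_{t'_i}}$ is made only after $\gamma'$ is built, so it cannot play this role and leaves you with no argument that the truncations preserve a definite proportion of $e^*_\gamma(x_i)$.

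Finally, the error estimate is too optimistic. You claim each application of Proposition \ref{split} costs at most $C\,4^{-j_i}$ and the total error is $\leq \tfrac{4C}{3}4^{-j_1}$, ``comfortably inside $49C4^{-j_1}$.'' But the modifications introduce \emph{multiplicative}, not just additive, losses: the inductive inequality the paper proves is $\varepsilon e^*_{\gamma^{i_-}}(x'_i) \leq 3\,\varepsilon e^*_{\gamma^i}(x'_i) + 4C\,4^{-j_i}$, plus separate controls for $j<i$ and $j>i$, and these compose across the induction to give the factor $3$; combined with the factor $2$ from $x_i \to x'_i$ this gives $6$. The ``$2\times 3$'' decomposition of $6$ that you offer (two boundaries, $3Cm_\gamma^{-1}$ from Lemma \ref{gonRIS}) is not the correct accounting. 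The organization of the induction is also different: the paper inducts over $i \in I'$ in increasing order (touching several tree levels at each step and carrying the invariants (i)--(vi)), not leaf-to-root; your leaf-to-root ordering does not interact cleanly with the constraint that modified ranks must land in the gap between $x'_{i}$ and $x'_{i_+}$, because that gap is determined by the index $i$, not by tree depth.
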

\begin{proof}
Fix $\gamma\in\Gamma$ and take $\varepsilon=\pm1$ with 
$|e^*_\gamma(\sum_{i\in I} x_i)| = \varepsilon e^*_\gamma(\sum_{i\in I} x_i)$.
Define $I'=\{i\in I \mid \varepsilon e^*_\gamma(x_i) > 4^{-j_i-1}\}$ and observe that
\begin{equation}\label{comp1}
    \varepsilon e^*_\gamma(\sum_{i\in I\setminus I'} x_i) \leq 4^{-j_1}.
\end{equation}

Let $(I_t, \varepsilon_t, \eta_t)_{t\in\T}$ be the tree analysis of $e^*_\gamma$ and for all $i\in I$
set $t_i^0$ to be the covering index for $x_i$.
For every $i\in I'$ we define $S_i^0 = \{s\in S_{t_i^0} \mid \rng(e^*_{\eta_s}P_{I_s}) \cap \rng x_i \neq \emptyset\}$,
$q_i^0 = \min S_i^0,\ r_i^0 = \max S_i^0$.
Denote the predecessor and the successor of $i$ in $I'$ by $i_-,\,i_+$ respectively.
Define 
\(
   \overline\varepsilon_t=\varepsilon \prod_{\emptyset\preceq u \preceq t} \varepsilon_u,
\)
for every $t\in\T$.

We will construct a $10C$-RIS $(x'_i)_{i\in I'}\subset \bb$ with $x'_i=P_{E_i}x_i$ for some interval $E_i$, such that
\begin{equation}\label{xx'}
    \varepsilon e^*_{\gamma}(x_i) \leq 2 \varepsilon e^*_{\gamma}(x_i').
\end{equation}
    
Fix $i\in I'$.
We consider two cases:

\noindent \textbf{Case A.}
$S_i^0=\{ q_i^0,\, r_i^0 \}$,
$\rng (e^*_{\eta_{q_i^0}}P_{I_{q_i^0}})\cap \rng(x_{i_-})\neq \emptyset$, and
$\rng (e^*_{\eta_{r_i^0}}P_{I_{r_i^0}})\cap \rng(x_{i_+})\neq \emptyset$.

If $\overline\varepsilon_{q_i^0}e^*_{\eta_{q_i^0}}P_{I_{q_i^0}}(x_i)
\geq \overline\varepsilon_{r_i^0}e^*_{\eta_{r_i^0}}P_{I_{r_i^0}}(x_i)$ then 
we define $E_i=\big[0,\min \rng(e^*_{\eta_{r_i^0}}P_{I_{r_i^0}})\big)$ and $x'_i=P_{E_i}x_i$.
We have
\[ 
    \overline\varepsilon_{t^0_i} e^*_{\eta_{t^0_i}}P_{I_{t^0_i}}(x_i) \leq
    2 \overline\varepsilon_{t^0_i} e^*_{\eta_{t^0_i}}P_{I_{t^0_i}}(x_i'),
\]
which implies inequality (\ref{xx'}).

If $\overline\varepsilon_{q_i^0}e^*_{\eta_{q_i^0}}P_{I_{q_i^0}}(x_i) < 
\overline\varepsilon_{r_i^0}e^*_{\eta_{r_i^0}}P_{I_{r_i^0}}(x_i)$  then 
we define $E_i=\big(\max \rng(e^*_{\eta_{q_i^0}}P_{I_{q_i^0}}),\infty\big)$ and $x'_i=P_{E_i}x_i$, so (\ref{xx'}) holds as above.

\noindent \textbf{Case B.} Case A does not hold.
We define $E_i=\rng x_i$, so $x'_i=x_i$ and inequality (\ref{xx'}) holds trivially.

For every $i\in I'$ we define $t_i\succeq t^0_i$ to be the covering index for $x'_i$.
Define $S_i = \{s\in S_{t_i} \mid \rng(e^*_{\eta_s}P_{I_s}) \cap \rng x_i' \neq \emptyset\}$,
$q_i = \min S_i,\ r_i = \max S_i$.

Observe that 
\[ \tag{$T1$} 
\text{for every $i\in I'$ there exists $s\in S_i$ with  $\rng(e^*_{\eta_{s}}P_{I_s})\subset \rng x'_i$.} 
\]
Indeed, since $t_i$ is the covering index for $x_i'$, then $q_i\neq r_i$.
In Case B its assumption yields exactly the existence of the required $s\in S_i^0=S_i$.
In Case A the definition of $E_i$ yields that $\rng (e^*_{\eta_{q_i}}P_{I_{q_i}})\cap \rng(x_{i_-})= \emptyset$ or
$\rng (e^*_{\eta_{r_i}}P_{I_{r_i}})\cap \rng(x_{i_+}) = \emptyset$, hence $(T1)$ holds.

Using Lemma \ref{lenpath} for $x'_i,\ j_i+1,\ t_i$ (the lower bound follows from (2)) for every $i\in I'$ we get that
\[ \tag{$T2$}
  \text{the length of the branch linking the root and the node $t_i$ is not greater than $j_i+2$}
\]

Let $\T^0\subseteq\T$ be the smallest subtree containing the root and all $t_i$ for $i\in I'$.
For every $t\in \T^0$ define $i_t=\max \{i\in I\mid m_{j_i} \leq m_t\}$ or $i_t=1$,
if the set is empty.

Fix $i\in I$ and notice that if there exists $t\preceq t_i$ such that $i<i_t$ then
we have $m_{j_i}<m_{j_{i_t}}\leq m_t$, hence by Lemma \ref{gonRIS}
\[ 
    |e^*_{\eta_t}P_{I_t}(x'_i)| \leq 6Cm_t^{-1} \leq m_{j_i}^{-1}.
\]

We show that $i\not\in I'$.
Consider the following formula
\[
    \varepsilon e^*_{\gamma}(x'_i) = \overline\varepsilon_t e^*_{\eta_t}P_{I_t}(x'_i) 
+ \sum_{u\prec t} \big( \prod_{\emptyset\preceq v\prec u}m_v^{-1}\big) \bdp(e^*_{\eta_u})P_{I_u}(x'_i).
\]
Since $t_i$ is the covering index then for every
$u\prec t$ there are at most two functionals in 
$\bdp(e^*_{\eta_u})$ with non-zero action on $x'_i$.
By $(T2)$ we get
\[ 
    |e^*_{\gamma}(x'_i)| \leq m_{j_i}^{-1} + 
    (j_i+2) 2 \cdot 3Cn_{j_i}^{-1} < 4^{-j_i},
\]
hence $i\not\in I'$.
Therefore, 
\[\tag{$T3$}
    \text{for every $i\in I'$ and every $t\preceq t_i$ we have $i\geq i_t$.}
\]


    

First, we will define $\gamma'\in\Gamma$ such that $e^*_{\gamma'}$ is comparable with $(x'_i)_{i\in I'}$ satisfying certain estimates.
Second, we will show the claimed inequality.

\noindent \textbf{STEP 1}. Inductive construction.
We define inductively on $I'$ a sequence $(\gamma^i)_{i\in \{0\}\cup I'}\subset \Gamma$ with $\gamma^0=\gamma$ and a sequence
$(\T^i)_{i\in I'}$ with $\T^i\subseteq\T^0$. 
Fix $i\in I'$ and denote $(\min I')_-=0$.

On the $i$-th step we change $\eta_s^{i_-}$ for some $s\in S_i$ to obtain $\gamma^i\in\Gamma$ 
(by Remark \ref{mt-part}) with
tree analysis $(I_t^i, \varepsilon_t, \eta_t^i)_{t\in\T^i}$ satisfying the following 
conditions
\begin{enumerate}[(i)]
    \item $e^*_{\gamma^i}$ is comparable with $(x'_j)_{j\leq i,\, j\in I'}$,
    \item for all $j>i,\,j\in I'$ we have 
        $\mt(e^*_{\eta_{t_i}^{i_-}})P_{I_{t_i}^{i_-}}(x_j') = \mt(e^*_{\eta_{t_i}^i})P_{I_{t_i}^i}(x_j')$,
    \item   
        if $i>i_{t_i}$ then
        \(
            \overline\varepsilon_{t_i} \mt(e^*_{\eta_{t_i}^{i_-}})P_{I_{t_i}^{i_-}}(x'_i) \leq 
            3 \overline\varepsilon_{t_i} \mt(e^*_{\eta_{t_i}^i})P_{I_{t_i}^i} (x'_i) +
            3C4^{-j_{i}},
        \)
        
    \noindent if $i=i_{t_i}$ then
    \(
        \overline\varepsilon_{t_i} e^*_{\eta_{t_i}^{i_-}}P_{I_{t_i}^{i_-}}(x'_i) \leq 
            3 \overline\varepsilon_{t_i} e^*_{\eta_{t_i}^i}P_{I_{t_i}^i} (x'_i) + 3C4^{-j_{i}},
    \) 
    \item  if $i>i_{t_i}$ then
    \(
        \overline\varepsilon_{t_i} e^*_{\eta_{t_i}^{i_-}}P_{I_{t_i}^{i_-}}(\sum_{j<i,\,j\in I'} x'_j) \leq
        \overline\varepsilon_{t_i} e^*_{\eta_{t_i}^i}P_{I_{t_i}^i}(\sum_{j<i,\,j\in I'} x'_j)+ 3C4^{-j_{i_-}},
    \)
    
    \noindent if $i=i_{t_i}$ then
        \(
            e^*_{\eta_{t_i}^i}P_{I_{t_i}^i}(x'_j) = 0, \text{ for all } j<i,\,j\in I',
        \)
    \item $\T^i\subseteq\T^{i_-}$,
    \item for all $t\in \T^i$ incomparable with $t_i$ we have $\eta^i_t=\eta^{i_-}_t,\ I^i_t = I^{i_-}_t$.
    
\end{enumerate}
We do the induction on $I'$.
Note that the induction stabilises after a finite number of steps since $e^*_\gamma$ has a finite range.
The inductive base is an easier version of the inductive step (there are no changes to be made to the left of 
$\rng x'_{\min I'}$), so we show only the latter.
Fix $i\in I'$.
We assume the inductive hypothesis for all $j<i,\,j\in I'$.

In order to get comparability we change $\eta_{t_i}^{i_-}$.
We treat possibilities $i>i_{t_i}$ and $i=i_{t_i}$ separately (recall that by $(T3)$ there are no $i<i_{t_i}$).

\noindent \textbf{Case I.} $i>i_{t_i}$.
We define $\T^i=\T^{i_-}$.
By $(T1)$ there exists $s\in S_i$ with 
$\rng(e^*_{\eta_{s}^{i_-}}P_{I_{s}^{i_-}})\subset \rng x'_i$. 

On the right end of $\rng x'_i$ we have the following cases:

\noindent \textbf{a)} $\rng (e^*_{\eta_{r_i}^{i_-}}P_{I_{r_i}^{i_-}})\cap \rng(x'_{i_+})\neq \emptyset$ and
$\overline\varepsilon_{r_i-}e^*_{\eta_{r_i-}^{i_-}}P_{I_{r_i-}^{i_-}}(x'_i)\geq 
\overline\varepsilon_{r_i}e^*_{\eta_{r_i}^{i_-}}P_{I_{r_i}^{i_-}}(x'_i)$.

Define $\eta_{r_i-}^i = \eta_{r_i-}^{i_-}$, $I_{r_i-}^i= I_{r_i-}^{i_-}$,
$\eta_{r_i}^i= \eta_{r_i}^{i_-}$, and $I_{r_i}^i = I_{r_i}^{i_-}\cap [\min\rng x'_{i_+},\infty)$.

\noindent \textbf{b)} $\rng (e^*_{\eta_{r_i}^{i_-}}P_{I_{r_i}^{i_-}})\cap \rng(x'_{i_+})\neq \emptyset$ and
$\overline\varepsilon_{r_i-}e^*_{\eta_{r_i-}^{i_-}}P_{I_{r_i-}^{i_-}}(x'_i) <
\overline\varepsilon_{r_i}e^*_{\eta_{r_i}^{i_-}}P_{I_{r_i}^{i_-}}(x'_i)$.

Proposition \ref{split} for $(\eta_{r_i}^{i_-}, \max\rng x'_i, j_i)$ gives a suitable node
$\eta_{r_i-}^i\in\Gamma$ 
and we define $I_{r_i-}^i=[\min \rng (e^*_{\eta_{r_i}^{i_-}}P_{I_{r_i}^{i_-}}),\, \rank(\eta_{r_i-}^i)]$,
$\eta_{r_i}^i = \eta_{r_i}^{i_-}$ and $I_{r_i}^i=I_{r_i}^{i_-}\cap [\min\rng x'_{i_+},\infty)$.

\noindent \textbf{c)} $\rng (e^*_{\eta_{r_i}^{i_-}}P_{I_{r_i}^{i_-}})\cap \rng(x'_{i_+}) = \emptyset$.

Define $\eta_{r_i}^i = \eta_{r_i}^{i_-}$ and $I_{r_i}^i=I_{r_i}^{i_-}$.

On the left end of $\rng x'_i$ we have the following cases:

\noindent \textbf{d)} $\rng (e^*_{\eta_{q_i}^{i_-}}P_{I_{q_i}^{i_-}})\cap \rng(x'_{i_-})\neq \emptyset$ and
$\overline\varepsilon_{q_i+}e^*_{\eta_{q_i+}^{i_-}}P_{I_{q_i+}^{i_-}}(x'_i)\geq 
\overline\varepsilon_{q_i}e^*_{\eta_{q_i}^{i_-}}P_{I_{q_i}^{i_-}}(x'_i)$.

Define $\eta_{q_i+}^i = \eta_{q_i+}^{i_-}$, $I_{q_i+}^i= I_{q_i+}^{i_-}$, and $I_{q_i}^i=I_{q_i}^{i_-}\cap [0,\rank(\eta_{q_i}^i)]$,
where $\eta_{q_i}^i$ is given by Proposition \ref{split} for \((\eta_{q_i}^{i_-}, \max\rng x'_{i_-}, j_{i_-})\),

\noindent \textbf{e)} $\rng (e^*_{\eta_{q_i}^{i_-}}P_{I_{q_i}^{i_-}})\cap \rng(x'_{i_-})\neq \emptyset$ and
$\overline\varepsilon_{q_i+}e^*_{\eta_{q_i+}^{i_-}}P_{I_{q_i+}^{i_-}}(x'_i) < 
\overline\varepsilon_{q_i}e^*_{\eta_{q_i}^{i_-}}P_{I_{q_i}^{i_-}}(x'_i)$.

Proposition \ref{split} for $(\eta_{q_i}^{i_-}, \max\rng x'_{i_-}, j_{i_-})$ gives a suitable node
$\eta_{q_i}^i\in\Gamma$ 
and we define $I_{q_i}^i=[\min \rng (e^*_{\eta_{q_i}^{i_-}}P_{I_{q_i}^{i_-}}),\, \rank(\eta_{q_i}^i)]$,
$\eta_{q_i+}^i = \eta_{q_i}^{i_-}$ and $I_{q_i+}^i=I_{q_i}^{i_-}\cap [\min\rng x'_i,\infty)$.

\noindent \textbf{f)} $\rng (e^*_{\eta_{q_i}^{i_-}}P_{I_{q_i}^{i_-}})\cap \rng(x'_{i_-}) = \emptyset$.

Define $\eta_{q_i}^i = \eta_{q_i}^{i_-}$ and $I_{q_i}^i=I_{q_i}^{i_-}$.

After introducing the changes listed above the conditions (i)-(v) are satisfied.







\noindent \textbf{Case II.} $i=i_{t_i}$.
We define $I_{t_i}^i=I_{t_i}^{i_-}\cap [\min\rng x'_i,\infty)$, so we don't have to change $\eta_{q_i}^{i_-}$.
Then we change $\eta_{r_i}^{i_-}$ as in case 1a), 1b) or 1c).
We define 
\[
    \T^i=\T^{i_-}\setminus\{\,s\in \T^{i_-}\mid s \text{ is a successor of $t_i$ in } \T^{i_-},\ \max\rng(e^*_{\eta_s}P_{I_s})<
    \min\rng x_i'\,\}.
\]
The conditions (i)-(v) are satisfied.

For any $s\in S_i$ not considered above we set $\eta^i_s=\eta^{i_-}_s$ and $I^i_s = I^{i_-}_s$.
By Remark \ref{mt-part} there is a node $\gamma^i$ with the tree analysis $(I_t^i, \varepsilon_t, \eta_t^i)_{t\in\T^i}$, where 
$\eta^i_t=\eta^{i_-}_t$ and $I^i_t = I^{i_-}_t$ for every $t\in\T^i$ incomparable with $t_i$, i.e. we obtain (vi).

This finishes the inductive construction.
Notice that change $\gamma^{i_-}\to\gamma^i$ induces only the following changes  in the action on $\sum_j x'_j$: $e^*_{\eta^{i_-}_t}\to e^*_{\eta^i_t}$ on $\rng x'_i$
and $\bdp(e^*_{\eta^{i_-}_t})\to\bdp(e^*_{\eta^i_t})$ for $t\preceq t_i$ on $[\min\rng x'_i,\infty)$.

We define $\gamma'$ to be the node on which the inductive construction stabilises.
Similarly, we define $\T'$.
This finishes STEP 1.
We proceed to show the estimate-part of the lemma.

\noindent \textbf{STEP 2}. Estimation of the errors.
Fix $i\in I'$.
We show that after introducing changes on the $i$-th step in the induction we have the following:
\begin{equation}\label{compi}
    \varepsilon e^*_{\gamma^{i_-}}(x'_i) \leq 3 \varepsilon e^*_{\gamma^i}(x'_i) + 4C4^{-j_i}.
\end{equation}
\begin{equation}\label{compjl}
    \varepsilon e^*_{\gamma^{i_-}}(\sum_{j<i,\,j\in I'}x'_j) \leq \varepsilon e^*_{\gamma^i}(\sum_{j<i,\,j\in I'}x'_j) + 3C4^{-j_i}.
\end{equation}
\begin{equation}\label{compjg}
    \varepsilon e^*_{\gamma^{i_-}}(x'_j) \leq \varepsilon e^*_{\gamma^i}(x'_j) + j_jCm_{j_j}^{-1}\text{ for ever } j>i,\, j\in I'.
\end{equation}

\noindent \textbf{Ad (\ref{compi})}. 
We use the equalities
\[
    \varepsilon e^*_{\gamma^{i_-}}(x'_i) = \overline\varepsilon_{t_i} e^*_{\eta_{t_i}^{i_-}}P_{I_{t_i}^{i_-}}(x'_i) 
+ \sum_{t\prec t_i} \big( \prod_{\emptyset\preceq u\prec t}m_u^{-1}\big) \bdp(e^*_{\eta_t^{i_-}})P_{I_t^{i_-}}(x'_i),
\]
\[
   \varepsilon e^*_{\gamma^i}(x'_i) = \overline\varepsilon_{t_i} e^*_{\eta_{t_i}^i}P_{I_{t_i^i}}(x'_i) 
+ \sum_{t\prec t_i} \big( \prod_{\emptyset\preceq u\prec t}m_u^{-1}\big)\bdp(e^*_{\eta_t^i})P_{I_t}^i(x'_i).
\]
We start with estimating bd-parts on $x'_i$, i.e. $\sum_{t\prec t_i} \big( \prod_{\emptyset\preceq u\prec t}m_u^{-1}\big)\bdp(e^*_{\eta_t^{i_-}})P_{I_t^{i_-}}(x'_i)$ and 
$\sum_{t\prec t_i} \big( \prod_{\emptyset\preceq u\prec t}m_u^{-1}\big) \bdp(e^*_{\eta_t^i})P_{I_t^i}(x'_i)$.
Since $t_i$ is a covering index for $x'_i$ there are at most two elements in bd-part of $e^*_{\eta_t^{i_-}}$ with
non-zero action on $x'_i$ for every $t\prec t_i$.
Using $(T2)$ we get
\begin{equation}\label{combd}
    \sum_{t\prec t_i}\big( \prod_{\emptyset\preceq u\prec t}m_u^{-1}\big) \bdp(e^*_{\eta_t^{i_-}})P_{I_t^{i_-}}(x'_i) \leq (j_i+2) 2\cdot3Cn_{j_i}^{-1} < Cm^{-1}_{j_i}.
\end{equation}
Similarly
\begin{equation}\label{combd'}
    \sum_{t\prec t_i}\big( \prod_{\emptyset\preceq u\prec t}m_u^{-1}\big) \bdp(e^*_{\eta_t^i})P_{I_t^i}(x'_i) \leq (j_i+2) 2\cdot3Cn_{j_i}^{-1} < Cm^{-1}_{j_i}.
\end{equation}

If $i=i_{t_i}$, then using (iii), (\ref{combd}), and (\ref{combd'}) we get (\ref{compi}). 
If $i>i_{t_i}$, then we observe that for the bd-part we have
\begin{equation}\label{compb}
    \overline\varepsilon_{t_i}\bdp(e^*_{\eta_{t_i}^{i_-}})P_{I_{t_i}^{i_-}}(x'_i) \leq
    n_{t_i}3Cn_{j_i}^{-1} \leq Cm^{-1}_{j_i}, \ 
    \overline\varepsilon_{t_i}\bdp(e^*_{\eta_{t_i}^i})P_{I_{t_i}^i}(x'_i) \leq
    n_{t_i}3Cn_{j_i}^{-1} \leq Cm^{-1}_{j_i}.
\end{equation}
By (iii), (\ref{combd}), (\ref{combd'}), and (\ref{compb}) we get (\ref{compi}).


    
\noindent \textbf{Ad (\ref{compjl})}. If $i>i_{t_i}$ then (\ref{compjl})
follows directly from (iv) and (vi).
If $i=i_{t_i}$, then using Lemma \ref{gonRIS} for every $j<i$ we obtain
\[
    \overline\varepsilon_{t_i}e^*_{\eta_{t_i}^{i_-}}P_{I_{t_i}^{i_-}}(\sum_{j < i,\,j\in I'} x'_j) \leq (i-1)6Cm_{t_i}^{-1} 
    \leq (i-1)6Cm_{j_i}^{-1} \leq C4^{-j_i}.
\]
This, (iv) and (vi) gives (\ref{compjl}).



\noindent \textbf{Ad (\ref{compjg})}.
Fix $j>i,\, j\in I'$.
We prove inductively that for every $t\preceq t_i$ we have
\[
    \overline\varepsilon_te^*_{\eta_t^{i_-}}P_{I_t^{i_-}}(x'_j) \leq 
    \overline\varepsilon_te^*_{\eta_t^i}P_{I_t^i}(x'_j) + 
    \#\{u \in \T^i \mid t\preceq u \preceq t_i \}\cdot Cm_{j_j}^{-1}.
\]
Start from $t=t_i$.
The condition (ii) gives
\[
    \overline\varepsilon_{t_i}e^*_{\eta_{t_i}^{i_-}}P_{I_{t_i}^{i_-}}(x'_j) = 
    \overline\varepsilon_{t_i}e^*_{\eta_{t_i}^i}P_{I_{t_i}^i}(x'_j) + 
    \overline\varepsilon_{t_i}\sum_{s\in S_{t_i}} d^*_{\beta_s^{i_-}}(x'_j) - 
    \overline\varepsilon_{t_i}\sum_{s\in S_{t_i}} d^*_{\beta_s^i}(x'_j)
\]
By ($T3$) we have $i_t\leq i < j$, and thus $n_t < n_{j_j}$.
It follows that 
\[
    \overline\varepsilon_t\sum_{s\in S_t} d^*_{\beta_s^{i_-}}(x'_j) - 
    \overline\varepsilon_t\sum_{s\in S_t} d^*_{\beta_s^i}(x'_j)
    \leq 2n_t3Cn_{j_j}^{-1} \leq Cm_{j_j}^{-1},
\]
which finishes the inductive base.

Let $t\neq t_i$.
For every $s\in S_t$ with $s \not\preceq t_i$ we have  by (vi) 
$\overline\varepsilon_se^*_{\eta_s^{i_-}}P_{I_s^{i_-}}(x'_j) = \overline\varepsilon_se^*_{\eta_s^i}P_{I_s^i}(x'_j)$.
Moreover, for $s_0\in S_t$ with $s_0 \preceq t_i$ we have the inductive hypothesis, so
\[
    \overline\varepsilon_t\mt(e^*_{\eta_t^{i_-}})P_{I_t^{i_-}}(x'_j) \leq 
    \overline\varepsilon_t\mt(e^*_{\eta_t^i}P_{I_t^i}(x'_j)) + 
    \#\{u \in \T^i \mid s_0\preceq u \preceq t_i \}\cdot Cm_{j_j}^{-1}.
\]
As in the inductive base we have for bd-parts 
\[
    |\sum_{s\in S_t} d^*_{\beta_s^{i_-}}(x'_j)|, \ 
    |\sum_{s\in S_t} d^*_{\beta_s^i}(x'_j)|
    \leq n_t3Cn_{j_j}^{-1} \leq Cm_{j_j}^{-1}/2,
\]
hence
\begin{align*}
    \overline\varepsilon_te^*_{\eta_t^{i_-}}P_{I_t^{i_-}}(x'_j) 
    & = \overline\varepsilon_t\sum_{s\in S_t} d^*_{\beta_s^{i_-}}(x'_j) + \overline\varepsilon_t\mt(e^*_{\eta_t^{i_-}})P_{I_t^{i_-}}(x'_j) \\
    & \leq  n_t3Cn_{j_j}^{-1} + \overline\varepsilon_t\mt(e^*_{\eta_t^i}P_{I_t^i}(x'_j)) + 
    \#\{u \in \T^i \mid s_0\preceq u \preceq t_i \}\cdot Cm_{j_j}^{-1} \\
    & = n_t3Cn_{j_j}^{-1} + \overline\varepsilon_te^*_{\eta_t^i}P_{I_t^i}(x'_j) - \overline\varepsilon_t\sum_{s\in S_t} d^*_{\beta_s^i}(x'_j) +
    \#\{u \in \T^i \mid s_0\preceq u \preceq t_i \}\cdot Cm_{j_j}^{-1} \\
    & \leq \overline\varepsilon_te^*_{\eta_t^i}P_{I_t^i}(x'_j) + \#\{u \in \T^i \mid t\preceq u \preceq t_i \}\cdot Cm_{j_j}^{-1}.
\end{align*}
The inductive step is finished.

Using $(T2)$ we obtain (\ref{compjg}). Indeed,
\[
    \varepsilon e^*_{\gamma^{i_-}}(x'_j) 
        \leq \varepsilon e^*_{\gamma^i}(x'_j) + (j_i+2)Cm_{j_j}^{-1}
        \leq \varepsilon e^*_{\gamma^i}(x'_j) + j_jCm_{j_j}^{-1}.
\]

Finally, having (3), (4) and (5) we obtain by an easy induction that for every $i\in I'$ we have
\[
    \varepsilon e^*_\gamma(\sum_{j\leq i ,\, j\in I'} x'_j) \leq 3 \varepsilon e^*_{\gamma^i}(\sum_{j\leq i ,\, j\in I'} x'_j) + 23C\sum_{j\leq i ,\, j\in I'}4^{-j_j},
\]
\[
    \varepsilon e^*_\gamma( x'_j) \leq \varepsilon e^*_{\gamma^i}(x'_j) + ij_jm_{j_j}^{-1}, \text{ for all } j>i,\, j\in I'.
\]

This, (1), and (2) finishes the proof of the lemma.
\end{proof}

\begin{lemma}\label{norm-sum}
Let $(y_i)_{i=1}^{n_j}$ be a  $C$-RIS in $\bb$ such that every $y_i$ is a projection on some interval of a $C-\ell^{n_{j_i}}_1$-average and let $(z_i)_{i=1}^{n_j}$ be a subsequence of
the standard basis of $\mT$. 
Let $m_{j_1} \geq 2n_j^3$, $j\geq 5$, and assume that $\max\rng y_i+j_i<\min\rng y_{i+1}$ for all $i$'s.
Then, for every interval $J$, there exists a functional $f$ in the norming set $W$ such that $\supp f \subseteq\supp(\sum_{j\in J}z_j)$ and 
\[
    \|\sum_{i\in J}y_i\| \leq 96f(\sum_{i\in J}z_i).
\]
\end{lemma}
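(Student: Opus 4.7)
The plan is to pick $\gamma\in\Gamma$ nearly norming $\sum_{i\in J}y_i$, apply Lemma \ref{comp} to replace $\gamma$ by a node $\gamma'$ whose tree analysis is comparable with suitable interval projections $x'_i=P_{E_i}y_i$, and then construct $f\in W$ by keeping the tree skeleton of $e^*_{\gamma'}$ above the covering indices $t_i$ of the $x'_i$ and replacing each subtree rooted at $t_i$ by the single basis functional $\varepsilon'_i e^*_{k_i}$, where $k_i=\min\supp z_i$ and $\varepsilon'_i\in\{\pm 1\}$ is chosen appropriately. The desired inequality then reduces to a path-by-path comparison of $|e^*_{\gamma'}(x'_i)|$ with $f(z_i)$.

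First I fix $\gamma\in\Gamma$ and $\varepsilon=\pm 1$ with $\varepsilon e^*_\gamma(\sum_{i\in J}y_i)\geq\|\sum_{i\in J}y_i\|-4^{-j_1}$. Since each $y_i$ is an interval projection of a $C$-$\ell_1^{n_{j_i}}$-average, Lemma \ref{average-bound} gives $|d^*_\xi(y_i)|\leq 3C/n_{j_i}$, so the hypotheses of Lemma \ref{comp} are met. The lemma produces $I'\subseteq J$, intervals $(E_i)_{i\in I'}$, and $\gamma'\in\Gamma$ whose tree analysis $(I_t,\varepsilon_t,\eta_t)_{t\in\T}$ is comparable with $(x'_i)_{i\in I'}$; for each $i\in I'$ let $t_i$ be the covering index of $x'_i$ and let $\T'\subseteq\T$ be the minimal subtree containing the root and all the $t_i$.

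Next I define $f\in W$ recursively on $\T'$ as outlined: set $f_{t_i}=\varepsilon'_ie^*_{k_i}$ with $\varepsilon'_i$ chosen so that the contribution $f(z_i)$ is non-negative; set $f_t=m_t^{-1}\sum_{s\in S_t\cap\T'}f_s$ for every other $t\in\T'$; and let $f=f_\emptyset$. Admissibility is automatic because $|S_t\cap\T'|\leq|S_t|\leq n_t$ places each combination in $\A_{n_t}$ with coefficient $m_t^{-1}$, which is exactly the shape allowed in $W[(\A_{n_j},m_j^{-1})_j]$, while the block ordering of leaves is inherited from $(z_i)$. Consequently $\supp f\subseteq\{k_i:i\in I'\}\subseteq\supp(\sum_{i\in J}z_i)$ and $f(z_i)=\prod_{u\prec t_i}m_u^{-1}$.

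Finally, I compare $|e^*_{\gamma'}(x'_i)|$ with $f(z_i)$ by unfolding the tree analysis along the branch $(u_l)_{l=0}^{r}$ from $\emptyset$ to $t_i$, whose length satisfies $r\leq j_i+2$ by Lemma \ref{lenpath}(1) applied to $x'_i$ (the required lower bound on $|e^*_{\gamma'}(x'_i)|$ follows from the definition of $I'$ via inequality (\ref{xx'}) in the proof of Lemma \ref{comp}). The expansion splits into bd-contributions at each strict ancestor $u_l\prec t_i$---each bounded by $2\cdot 3C/n_{j_i}$ times the weight $\prod_{l'<l}m_{u_{l'}}^{-1}$, because at most two bd-terms in $\bdp(e^*_{\eta_{u_l}})P_{I_{u_l}}$ can hit $x'_i$ when $t_i$ is its covering index---plus the terminal main term $(\prod_{u\prec t_i}m_u^{-1})\cdot e^*_{\eta_{t_i}}P_{I_{t_i}}(x'_i)$, which is at most a universal multiple of $C\cdot f(z_i)$ since $\|x'_i\|$ is $O(C)$. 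Summing over $i\in I'$ with $|I'|\leq n_j$ and invoking $m_{j_1}\geq 2n_j^3$ together with the growth conditions on $(n_j)$ absorbs the total bd-error into a quantity negligible compared to $f(\sum z_i)$; combining with the estimate of Lemma \ref{comp} yields $\|\sum_{i\in J}y_i\|\leq 96f(\sum_{i\in J}z_i)$ once constants are tracked (the factor $6$ from Lemma \ref{comp}, the factor from the bound on $\|x'_i\|$, and the absorption of bd-errors). The main obstacle is precisely this bookkeeping of bd-errors at tree heights where $n_{u_l}$ may be close to $n_{j_i}$; the explicit growth assumptions together with the RIS separation $\max\rng y_i+j_i<\min\rng y_{i+1}$ are what make the errors telescope to a negligible total.
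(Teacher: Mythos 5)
Your overall plan — pick a nearly norming $\gamma$, apply Lemma~\ref{comp} to get comparability with interval projections $x'_i$, then transplant the tree skeleton of $e^*_{\gamma'}$ into a functional $f\in W$ whose leaves sit on the coordinates of the $z_i$ — matches the paper's strategy. But the construction of $f$ as you describe it has a genuine structural gap at the covering indices.

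You set $f_{t_i}=\varepsilon'_i e^*_{k_i}$, i.e., you collapse each covering index $t_i$ to a single leaf. This fails in two ways that the paper is careful to handle. First, a single node $t$ can be the covering index for several $x'_i$'s simultaneously (the set $E_t$ in the paper), so $f_t$ cannot be ``the'' basis functional for one $i$. Second, and more seriously, $t_i$ need not be a leaf of the subtree $\T'$: a later $t_{i'}$ may lie strictly below $t_i$. In your construction $f_{t_i}$ is a leaf, which kills every contribution from descendants of $t_i$, so $f(z_{i'})=0$ for all $i'$ with $t_{i'}\succ t_i$; the final comparison $\|y\|\leq 96 f(z)$ then cannot be recovered. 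The paper's Claim \ref{y} avoids this by building $f_t=\frac{1}{m_t}\bigl(\sum_{s\in S_t\cap\T'}f_s+\sum_{i\in E_t}g_i\bigr)$ at every non-terminal $t\in\T'$, mixing the recursive subtree pieces with the leaf pieces for the $y_i$'s covered at $t$, and verifying admissibility via $\#E_t\leq \#(S_t\setminus\T')$ so that the total number of summands is at most $\#S_t\leq n_t$.

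A second point you gloss over is the case split $m_t<m_{j_1}$ versus $m_t\geq m_{j_1}$ at a covering index (Claim \ref{x} in the paper). When $m_t\geq m_{j_1}$, the $x'_i$'s with $m_{j_i}\leq m_t$ contribute only $O(C/m_t)$ by Lemma~\ref{gonRIS} and are absorbed into the error, while the others are treated via the $\ell_1$-average/projection lemma; without this split the bound $|e^*_{\eta_t}P_{I_t}(x'_i)|\leq\|x'_i\|\leq 10C$ that you invoke is too coarse to keep the constants under control across all $i\in E_t$. Your ``bookkeeping of bd-errors'' remark is directed at the wrong bottleneck: the delicate part is not the bd-errors along the branch (those are indeed small by $(T2)$-type bounds) but the correct recursive weighting of $f$ at covering indices that are themselves non-terminal in $\T'$, together with the $m_t$-vs-$m_{j_1}$ dichotomy.
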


\begin{proof}
Set $y=\sum_{i\in J}y_i$ and $z=\sum_{i\in J}z_i$ and 
choose $\gamma\in\Gamma$ with $|e^*_\gamma(y)|\geq \frac{6}{7}\|y\|$ and
the tree analysis $(I_t, \varepsilon_t, \eta_t)_{t\in\T}$.
By Lemma \ref{comp} we can assume that the tree analysis of $e^*_\gamma$ is comparable with $(y'_i)_{i\in J'}$, where $y'_i$ is a projection of $y_i$ on some interval,
and $|e^*_\gamma(\sum_{i\in J'} y'_i)|\geq \frac{1}{7}\|y\| - 9C4^{-j_1}$ for some $J'\subseteq J$.
We will construct $f\in W$ such that $\supp f\subseteq \bigcup_{i\in J'}\supp z_i$ and
$|e^*_\gamma(\sum_{i\in J'} y'_i)| \leq 12f(\sum_{i\in J'} z_i) = 12f(z)$,
and thus we may assume that $J'=J$.
Moreover, by Lemma \ref{gonRIS} the sequence $(y'_i)_{i\in J}$ is $10C$-RIS so we may assume that $y'_i=y_i$.

For every $i$ let $t_i\in \T$ be the covering index for $y_i$
and for every $t\in \T$ let $E_t=\{i\in J\mid t=t_i\}$.
\begin{claim}\label{x}
Fix $t=t_{i_0}$ for some $i_0$ .
\begin{enumerate}
    \item[(a)] Assume that $m_t<m_{j_1}$.
    Then for $g_i = z_i^*$, $i\in E_t$, we have
    \[
        |e^*_{\eta_t}P_{I_t}(\sum_{i\in E_t}y_i)| \leq \frac{1}{n_j^2} + 
        \frac{6}{m_t}\sum_{i\in E_t}g_i(z).
    \]
    \item[(b)] Assume that $m_t \geq m_{j_1}$.
    Then there exists $g_t\in W$ with 
        $\supp g_t \subseteq \supp (\sum_{i\in E_t} z_i)$, and such that
    \[
         |e^*_{\eta_t}P_{I_t}(\sum_{i\in E_t}y_i)| \leq \frac{1}{n_j^2} + 
        12g_t(z).
    \]
\end{enumerate}
\end{claim}
In (a) by defining $g_t = \frac{1}{m_t}\sum_{i\in E_t}g_i$, we obtain the inequality of $(b)$, 
however, in the sequel we need the more precise form, hence we keep in $(a)$ the precise form of $g_t$.
\begin{proof}[Proof of the claim]
(a).
We set $S_i=\{\,s\in S_t \mid \rng(e^*_{\eta_s}P_{I_s}) \subset \rng(y_i)\,\}$ and 
set $g_i = z_i^*$, $i\in E_t$.
Using Lemmas \ref{average-bound}(2) and \ref{norm-proj} for every $y_i$ (see Remark \ref{aveproj}) we estimate
\begin{align*}
    |e^*_{\eta_t}P_{I_t}(\sum_{i\in E_t} y_i)| 
    & = |\sum_{s\in S_t}d^*_{\beta_s}(\sum_{i\in E_t} y_i) + 
        \frac{1}{m_t} \sum_{i\in E_t} \sum_{s\in S_i}e^*_{\eta_s}P_{I_s}(y_i)| \\
    & \leq n_t \frac{3C}{n_{j_1}} + 
        \frac{1}{m_t} 3 \sum_{i\in E_t}  \left(1+\frac{2|S_i|}{n_{j_i}}\right) \\
    & \leq \frac{1}{n_j^2} +
        \frac{1}{m_t} 6\sum_{i\in E_t} g_i(z).
\end{align*}

(b). Define $s_t=\min S_t$, and
$i_t = \max\{i\in E_t\mid m_{j_i} \leq m_t\}$.
For every $i<i_t$ we have by Lemma \ref{gonRIS}
\[
    |e^*_{\eta_t}P_{I_t}(y_i)| \leq \frac{6C}{m_t} \leq \frac{6C}{m_{i_t}} < \frac{1}{n_j^3}.
\]
This implies
\begin{equation}\label{b1}
    |e^*_{\eta_t}P_{I_t}(\sum_{i\in E_t,\, i<i_t} y_i)| \leq \frac{1}{2n_j^2}.
\end{equation}

Now let $i\in E_t,\ i \geq i_t$.
If 
\begin{equation}\label{it>}
    |e^*_{\eta_t}P_{I_t}(y_{i_t})| > 
    |e^*_{\eta_t}P_{I_t}(\sum_{i\in E_t,\, i > i_t} y_i)|,
\end{equation}
then for $g_t = z_{i_t}^*$ we obtain
\begin{equation}\label{b2}
    |e^*_{\eta_t}P_{I_t}(\sum_{i\in E_t,\, i \geq i_t} y_i)| \leq 2\cdot 3C g_t(z).
\end{equation}
On the other hand, if (\ref{it>}) is not true, 
then we proceed as in Claim 1.
We set $S_i=\{\,s\in S_t \mid \rng(e^*_{\eta_s}P_{I_s}) \subset \rng(y_i)\,\}$ and 
 $g_i = z_i^*$.
Then we set $g_t=\frac{1}{m_t} \sum_{i\in E_t,\, i > i_t} g_i$.
The definition is correct as $g_i$'s are pairwise different
and $\# E_t \leq \# S_t \leq n_t$.
Using Lemmas \ref{average-bound}(2) and \ref{norm-proj} for every $y_i$  (see Remark \ref{aveproj}) we estimate
\begin{align} \label{b3}
    |e^*_{\eta_t}P_{I_t}(\sum_{i\in E_t,\, i \geq i_t} y_i)| 
    & \leq 2|\sum_{s\in S_t}d^*_{\beta_s}(\sum_{i\in E_t,\, i > i_t} y_i) + 
        \frac{1}{m_t} \sum_{i\in E_t,\, i > i_t} \sum_{s\in S_i}e^*_{\eta_s}P_{I_s}(y_i)| \\ \nonumber
    & \leq 2n_t \frac{3C}{n_{j_{i_t+1}}} + 
        2\frac{1}{m_t} 3 \sum_{i\in E_t,\, i > i_t}  \left(1+\frac{2|S_i|}{n_{j_i}}\right) \\ \nonumber
    & \leq \frac{1}{2n_j^2} +
        \frac{1}{m_t} 12\sum_{i\in E_t,\, i > i_t} g_i(z) \\ \nonumber
    & \leq \frac{1}{2n_j^2} + 12g_t(z).
\end{align}
Combining (\ref{b1}), (\ref{b2}), and (\ref{b3}) we obtain (b).
\end{proof}

Let $\T'$ be the smallest subtree of $\T$ containing the root and all $t_i$'s. 
\begin{claim}\label{y}
For every $t\in \T'$ there exists $f_t\in W$ such that 
$\rng f_t \subseteq \bigcup \{\rng z_i \mid \rng y_i \cap
\rng(e^*_{\eta_t}P_{I_t}) \neq 0 \}$ and 
\[
    |e^*_{\eta_t}P_{I_t}(y)| \leq  
    \frac{3}{n_j^2}\#\{s \in \T'\mid s\succeq t\} + 12f_t(z).
\]
\end{claim}
\begin{proof}[Proof of the claim.]
We prove the claim by induction on the tree $\T'$ starting from terminal nodes.

Let $t$ be a terminal node. 
Then $t=t_{i_0}$ for some $i_0$ and we use Claim \ref{x}.
If $m_t < m_{j_1}$, then Claim \ref{x}(a) gives $(g_i)_{i\in E_t}$,
we set $f_t=\frac{1}{m_t}\sum_{i\in E_t}g_i$ and we are done.
If $m_t \geq m_{j_1}$, then we just take $f_t=g_t$ given by Claim \ref{x}(b).

Let $t$ be a non-terminal node and define $R_t = J\setminus E_t$. 
Observe that for every $i\in R_t$ we have $t\prec t_i$, so there are at most two
$s\in S_t$ such that $d^*_{\beta_s}(y_i) \neq 0$.
This implies
\begin{equation}\label{bullet}
    |\sum_{s\in S_t}d^*_{\beta_s}(\sum_{i\in R_t} y_i)|
    \leq 2n_j\frac{3C}{n_{j_1}} \leq \frac{1}{n_j^2}.
\end{equation}
The inductive assumption gives for every $s\in S_t\cap\T'$ that
\[
    |e^*_{\eta_s}P_{I_s}(y)| \leq 
    \frac{3}{n_j^2}\#\{u\in \T' \mid u \succeq s\}) + 12f_s(z),
\]
where $f_s\in W$ is such that 
$\supp f_s \subseteq \bigcup \{\supp z_i \mid \rng y_i \cap
\rng(e^*_{\eta_s}P_{I_s}) \neq 0 \}$.

If $t\neq t_i$ for all $i$ then we define 
$f_t=\frac{1}{m_t}\sum_{s\in S_t\cap \T'} f_s$.
Functional $f_t$ is in $W$ since by comparability all $(f_s)_{s\in S_t\cap \T'}$ have pairwise disjoint ranges.
We have $E_t = \emptyset$, hence by (\ref{bullet}) and the inductive assumption
\begin{align*}
    |e^*_{\eta_t}P_{I_t}(y)| & 
    \leq |\sum_{s\in S_t}d^*_{\beta_s}(\sum_{i\in R_t} y_i)|
    + |\frac{1}{m_t}\sum_{s\in S_t\cap \T'}e^*_{\eta_s}P_{I_s}(y)| \\
    & 
    \leq \frac{3}{n_j^2}(1+\sum_{s\in S_t\cap \T'}\# 
    \{u\in \T' \mid u \succeq s\}) + 12f_t(z),
\end{align*}
and we are done.

If $t=t_{i_0}$ for some $i_0$ then we use Claim \ref{x}. 
We have the following cases.

If $m_t < m_{j_1}$ then from Claim $1(a)$ we have 
\[
        |e^*_{\eta_t}P_{I_t}(\sum_{i\in E_t}y_i)| \leq
        \frac{1}{n_j^2} + 
        \frac{6}{m_t}\sum_{i\in E_t}g_i(z).
\]
Define 
\[
    f_t=\frac{1}{m_t}\left( \sum_{s\in S_t\cap\T'} f_s
        +\sum_{i\in E_t}g_i\right).
\]
Observe that $f_t\in W$ since 
$\{\rng f_s\mid s\in S_t\cap\T'\} \cup \{\rng g_i \mid i\in E_t\}$ 
are pairwise disjoint and $\# E_t \leq \#(S_t\setminus\T')$.
Therefore, using (\ref{bullet}) and the inductive assumption
\begin{align*}
    |e^*_{\eta_t}P_{I_t}(\sum_{i\in R_t} y_i) |
    & \leq |\sum_{s\in S_t}d^*_{\beta_s}(\sum_{i\in R_t} y_i)| + 
        \frac{1}{m_t} |\sum_{s\in S_t\cap\T'}e^*_{\eta_s}P_{I_s}
        (\sum_{i\in R_t}y_i)|  \\
    & \leq 
        \frac{1}{n_j^2} + \frac{1}{m_t} \sum_{s\in S_t\cap \T'}
        \left(\frac{3}{n_j^2}\#\{u\in \T'\mid u\succeq s\} + 12f_s(z)\right),
\end{align*}
and thus
\[
|e^*_{\eta_t}P_{I_t}(y)| \leq 
|e^*_{\eta_t}P_{I_t}(\sum_{i\in R_t} y_i)| + 
|e^*_{\eta_t}P_{I_t}(\sum_{i\in E_t} y_i)|
\leq 
        \frac{3}{n_j^2}(1 + \sum_{s\in S_t\cap \T'}\#\{t_i\mid t_i\succeq s\})
         + 12f_t(z),
\]
and we are done.

We are left with the case $m_t \geq m_{j_1}$.
Claim $1(b)$ gives $g_t\in W$ such that 
$\supp g_t \subseteq \supp (\sum_{i\in E_t} z_i)$, and
\[
    |e^*_{\eta_t}P_{I_t}(\sum_{i\in E_t}y_i)| \leq \frac{1}{n_j^2} + 
    12g_t(z),
\]
whereas (\ref{bullet}) gives, as $s\in S_t,\ i\in R_t,\ s\preceq t_i$, that
\begin{align*}
     |e^*_{\eta_t}P_{I_t}(\sum_{i\in R_t} y_i)| & \leq
     |\sum_{s\in S_t}d^*_{\beta_s}(\sum_{i\in R_t} y_i)| + 
        \frac{1}{m_t} |\sum_{s\in S_t}e^*_{\eta_s}P_{I_s}
        (\sum_{i\in R_t}y_i)| \\
    & \leq \frac{1}{n_j^2} + \frac{1}{m_t}\sum_{i\in R_t}\|y_i\| 
     \leq \frac{1}{n_j^2} + \frac{n_j}{m_t} 
     \leq  \frac{2}{n_j^2}.
\end{align*}
Combining the two above estimations and setting $f_t=g_t$ 
finishes the inductive proof.
\end{proof}

Let $f=f_\emptyset \in W$.
Claim \ref{y} yields
\[
    \frac{1}{7}\|y\| -9C4^{-j_1}\leq |e^*_\gamma(y)| \leq 
    \frac{3}{n_j^2}\#\{t\leq \T'\mid t\succeq \emptyset\}+12f(z)
    \leq \frac{6}{n_j} + 12f(z),
\]
as $\|y\|\geq 1$ implies
\[
    \|y\| \leq 8\cdot12f(z) \leq 96f(z).
\]
The proof of the lemma is finished.
\end{proof}

\section{Proof of Main Theorem.}

\begin{theorem}\label{thm:a}
For every infinite dimensional block subspace $Y$ of $\bb$ there exists
a block sequence $(y_i)_{i\in \N}$ in $Y$ and sequences of natural numbers $(j_i)_{i\in\N}$, 
$((k_{i,j})_{j=1}^{n_{j_i}})_{i\in\N}$ such that sequences 
$(y_i)_{i\in \N}$ and 
$(\frac{m_{j_i}}{n_{j_i}}\sum_{j=1}^{n_{j_i}} e_{k_{i,j}} )_{i\in\N}\subset \mT$ are equivalent.
\end{theorem}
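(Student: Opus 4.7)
The plan is to start from the corollary following Lemma~\ref{er} and extract in $Y$ a two-level $C$-RIS $(y_i)_{i\in\N}$ with growth index $(j_i)_i$ chosen rapidly enough to meet the hypotheses of Lemmas~\ref{norm-sum} and~\ref{comp} (in particular $m_{j_1}\geq 2n_{j_i}^3$ and the gap condition $\max\rng y_i + j_i < \min\rng y_{i+1}$). By definition $y_i = \frac{c_i m_{j_i}}{n_{j_i}}\sum_{j=1}^{n_{j_i}} y_{i,j}$ with $y_{i,j}$ a normalised $\ell_1^{n_{j_{i,j}}}$-average. Setting $k_{i,j} := \min\supp y_{i,j}$ and $v_i := \frac{m_{j_i}}{n_{j_i}}\sum_{j=1}^{n_{j_i}} e_{k_{i,j}} \in \mT$, Proposition~\ref{baver-norm} gives $\|v_i\|_\mT = 1$, matching the normalisation on the $\bb$-side. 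It then remains to prove a two-sided equivalence $\|\sum_i a_i y_i\|_\bb \asymp \|\sum_i a_i v_i\|_\mT$ with constants independent of the finitely supported sequence $(a_i)$.

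For the upper bound, I would fix $\gamma \in \Gamma$ nearly attaining $\|\sum_i a_i y_i\|_\bb$; by an elementary sign argument we may assume $a_i \geq 0$ and, after rescaling, $\max_i a_i = 1$, so that $(a_i y_i)_i$ is still a $C$-RIS. Apply Lemma~\ref{comp} to replace $\gamma$ by a comparable $\gamma'$ at the cost of an error of order $4^{-j_1}$. On the comparable tree, repeat the two-claim synthesis from the proof of Lemma~\ref{norm-sum}: at each covering node $t$, bound the local mt-contribution using Lemmas~\ref{average-bound} and~\ref{norm-proj}, and control the bd-parts via Lemma~\ref{est-bd}. The outcome is a functional $f \in W$ with $\supp f \subseteq \{k_{i,j}\}_{i,j}$ and $|e^*_{\gamma'}(\sum_i a_i y_i)| \leq C' f(\sum_i a_i v_i)$.

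For the lower bound, fix $f \in W$ attaining $\|\sum_i a_i v_i\|_\mT$ with tree analysis $(f_t)_{t\in\T}$. Construct by leaf-to-root induction a node $\gamma \in \Gamma$ mirroring $\T$: at a leaf $v$ with $f_v = \pm e^*_{k_{i,j}}$, pick $\eta_v \in \Gamma$ in $\rng y_{i,j}$ at which $\pm e^*_{\eta_v}(y_{i,j})$ is close to $\|y_{i,j}\|=1$; at a non-terminal $t$ with $f_t = m_\ell^{-1}\sum_{s\in S_t} f_s$ and $\#S_t\leq n_\ell$, assemble the $\eta_s$'s into a $\bb$-node of weight $m_\ell^{-1}$ and age $\#S_t$ via Remark~\ref{mt-part}; admissibility is guaranteed by the growth of $(n_j)$ and by separation of the ranges $\rng y_{i,j}$. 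The evaluation $e^*_\gamma(\sum_i a_i y_i)$ then reproduces $\sum_i a_i c_i f(v_i)$ up to bd-errors controlled by Lemma~\ref{est-bd}, and the uniform lower bound $c_i \geq 1/(10C)$ from Corollary~\ref{cis-bounds} closes the estimate.

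The principal obstacle is the upper bound: the tree analysis of $\gamma'$ can either cover an entire $y_i$, or straddle several inner averages $y_{i,j}$ within one $y_i$, and we must synchronise these two regimes with the admissibility rules of $W$ so that the functional $f$ assembled from the tree is both valid in $\mT$ and supported precisely on the indices $k_{i,j}$. Lemma~\ref{comp} is indispensable in regularising the ranges; the residual work amounts to extending Lemma~\ref{norm-sum} from the single-level RIS it treats to the two-level RIS at hand, with the inner $\ell_1$-averages being traded for the basis vectors $e_{k_{i,j}}$ in the process.
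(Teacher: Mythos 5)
Your proposal takes essentially the same route as the paper: a two-level RIS is extracted in $Y$, the upper bound is obtained by applying Lemma~\ref{comp} and then synthesising a functional $f\in W$ along the comparable tree analysis via Lemmas~\ref{norm-sum}, \ref{norm-proj} and \ref{est-bd}, while the lower bound is obtained by transcribing a norming $f\in W$ into a node $\gamma\in\Gamma$ leaf-to-root via Remark~\ref{mt-part}. The one minor divergence is in the lower bound, where the paper does not invoke Lemma~\ref{est-bd} but instead chooses the ranks of the bd-nodes $\beta_s$ in the gaps between consecutive $y_{i,j}$'s (using the two-level RIS spacing and the branch-length bound from Lemma~\ref{lenpath}), forcing $\bdp(e^*_{\eta_t})(y)=0$ outright rather than merely small.
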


\begin{proof}

Fix some subspace $Y$ of $\bb$ and a two-level $3$-RIS $(y_i)_{i\in \N}$
from $Y$.
By the definition of two-level RIS for every $i\in \N$ there exists
a normalised $3$-RIS $y_{i,1},\dots,y_{i,j_i} \in Y$ with growth index
$j_i<j_{i,1}<\dots<j_{i,n_{j_i}}$ such that
$y_i = \frac{c_im_{j_i}}{n_{j_i}}\sum_{j=1}^{n_{j_i}} y_{i,j}$, where $c_i>0$ is some normalising
constant with $1/30 \leq c_i \leq 1$.
Moreover, we assume that $j_1\geq 5$.

Concerning basis averages we define for every $i\in \N$ a vector
$z_i = \frac{m_{j_i}}{n_{j_i}}\sum_{j=1}^{n_{j_i}} z_{i,j}$, where $z_{i,j}$'s with lexicographical
ordering on pairs $(i,j)$ is a subsequence of the standard basis.
Recall $\|z_i\|=1$ by Proposition \ref{baver-norm}.
We will show that for every finite sequence of scalars $(a_i)_{i\in I}$ we have
\[
    2^{-7}\|\sum_{i\in I} a_iz_i\| \leq \|\sum_{i\in I} a_iy_i\| \leq 
    2^{10}\|\sum_{i\in I} a_iz_i\|.
\]

\begin{lemma}
For every finite sequence of scalars $(a_i)_{i\in I}$ we have
\[
    \|\sum_{i\in I} a_iz_i\| \leq  2^7\|\sum_{i\in I} a_iy_i\|.
\]
\end{lemma}

\begin{proof}
Fix a finite sequence of scalars $(a_i)_{i\in I}$ and set $y=\sum_{i\in I} a_iy_i$, $z=\sum_{i\in I} a_iz_i$. 
We assume $\|z\|=1$ and $a_i>0$ for all $i\in I$ (by unconditionality of the standard basis of $\mT$).
Choose $f\in W$ with $f(z)=\|z\|$ and 
$\supp f\subseteq \supp z$.
Let $(f_t)_{t\in\T}$ be a tree analysis of $f$.
Consider $P=\{(i,j) \mid i\in\N,\,j=1,\dots,n_{j_i})\}$ with lexicographical ordering as a sequence.
For every $p\in P$ let $t_p\in\T$ be such that $f_{t_p} = \pm z^*_p$ (recall 
$(z_p)_{p\in P}$ is a subsequence of the basis).
Without loss of generality we can assume that $f_{t_p} = z^*_p$ for all $p\in P$.

Define $P'=\{p\in P\mid f(z_p) > 4^{-j_p-1}\}$. 
By Lemma \ref{lenpath} for all $p\in P'$ the branch linking the root and $t_p$ is of length less
than or equal to $j_p$.
Observe that $f(\sum_{p\in P\setminus P'}z_p) \leq 4^{-j_1}$, hence by relaxing $f(z)=1$, to 
$f(z) \geq 1/2$ we may assume that $P=P'$.
Let $\T'\subseteq\T$ be the minimal tree containing the root and all $t_p$'s for $p\in P$.

We will inductively construct a node $\gamma$ with the tree analysis 
$(I_t, \varepsilon_t, \eta_t)_{t\in\T'}$ satisfying for every  $t\in\T'$ the following
\[ 
    f_t(z) \leq 2^6 \varepsilon_te^*_{\eta_t}P_{I_t}(y),
\]
and for every non-terminal $t\in \T'$, writing $\bdp(e^*_{\eta_t})=\sum_{s\in S_t\cap\T'}d^*_{\beta_s}$,
and for every $s\in S_t\cap\T'$ we have 
\[
    \max\rng y_{p_s} < \rank(\beta_s) \leq \max\rng y_{p_s} + \len b(s, t_{p_s}),
\]
where $p_s=\max\{p\in P\mid s\preceq t_p\}$ and $\len b(s,t_{p_s})$ is the length of
the branch linking $s$ and $t_{p_s}$.
The induction on $\T'$ starts from terminal nodes.

Let $t$ be a terminal node in $\T'$. 
Then $t=t_p$ for some $p\in P$.
Choose $\eta_{t_p}\in \Gamma$ such that $|e^*_{\eta_{t_p}}(y_p)| \geq 1/2$ and 
$\rank(\eta_{t_p})\in \rng y_p$.
Then choose $\varepsilon_{t_p}$ such that 
$|e^*_{\eta_{t_p}}(y_p)| = \varepsilon_{t_p}e^*_{\eta_{t_p}}(y_p)$ and 
$I_{t_p}=\rng e^*_{\eta_{t_p}} \cap [\min\rng y_p, \infty)$.
This choice guarantees
\[ 
    f_{t_p}(z_p) = 1 \leq 2 \varepsilon_{t_p}e^*_{\eta_{t_p}}P_{I_{t_p}}(y_p).
\]
Using the lower bound $c_i \geq 1/30$ we get
\[ 
    f_{t_p}(z) \leq 2^6  \varepsilon_{t_p}e^*_{\eta_{t_p}}P_{I_{t_p}}(y).
\]

Let $t$ be a non-terminal node in $\T'$.
Then $f_t=m_t^{-1}\sum_{s\in S_t\cap\T'}f_s$ and we choose 
$\eta_t\in\Gamma$ such that 
\[
    \mt(e^*_{\eta_t})=m_t^{-1}\sum_{s\in S_t\cap\T'}\varepsilon_se^*_{\eta_s}P_{I_s},\ 
    \bdp(e^*_{\eta_t})=\sum_{s\in S_t\cap\T'}d^*_{\beta_s},
\]
with 
\[
    \rank(\beta_s) = 
    \begin{cases}
        \max\rng y_{p_s} + 1    & \text{, if $s$ is a terminal node,} \\
        \rank(\eta_s) + 1       & \text{, if $s$ is a non-terminal node.}
    \end{cases}
\]
Set $\varepsilon_t=1$ and $I_t=\rng(e^*_{\eta_t})$.

The choice of $\beta_s$'s, the bound of the length of the branch linking $z_p$'s and the root,
and the assumption on the size of gaps between consecutive $y_p$'s gives $\bdp(e^*_{\eta_t})(y)=0$.
Indeed, for every $s\in S_t\cap\T'$ we have $\len b(s, t_{p_s}) \leq j_{p_s}$ by Lemma \ref{lenpath} 
and $\max\rng y_{p_s} + j_{p_s} < \min\rng y_{p_s+}$ by the definition of two-level RIS.

Finally the inductive assumption gives
\[
    f_t(z) = m_t^{-1}\sum_{s\in S_t}f_s(z)  \leq \sum_{s\in S_t}d^*_{\beta_s}(y) + 
    m_t^{-1}\sum_{s\in S_t}2\varepsilon_se^*_{\eta_s}P_{I_s}(y) = 2^6 \varepsilon_te^*_{\eta_t}P_{I_t}(y).
\]
This finishes the inductive construction.
We set $\gamma=\eta_\emptyset$ and notice that 
\[
    \|z\|/2 = 1/2 < f_t(z) \leq  2^6 e^*_\gamma(y)\leq 2\|y\|.
\]
The proof of the lemma is finished.
\end{proof}

\begin{lemma}
For every finite sequence of scalars $(a_i)_{i\in I}$ we have
\[
    \|\sum_{i\in I} a_iy_i\| \leq  2^{10}\|\sum_{i\in I} a_iz_i\|.
\]
\end{lemma}

\begin{proof}
Fix a finite sequence of scalars $(a_i)_{i\in I}$ and set $y=\sum_{i\in I} a_iy_i$, $z=\sum_{i\in I} a_iz_i$. 
We assume $\|y\|=1$, hence for every $i\in I$ we have 
$|a_i| \leq \| P_{\rng y_i}(y)\| \leq 4$.
Let $\gamma$ be such that $|e^*_\gamma(y)|\geq 6/7$ and take its tree analysis $(I_t, \varepsilon_t, \eta_t)_{t\in\T}$.
Consider $P=\{(i,j) \mid i\in\N,\,j=1,\dots,n_{j_i})\}$ with lexicographical ordering as a sequence.
Notice that the sequence $(y_p)_{p\in P}$ is also a RIS.
Thus by applying Lemma \ref{comp} for $\gamma$ and $(a_iy_{i,j})_{(i,j)\in P}$ we can assume that the tree analysis of $\gamma$ is comparable with $(y'_{i,j})$, where $y'_p$ is a projection of $y_p$ on some interval, and $|e^*_\gamma(\sum_{(i,j)\in P'}a_iy'_{i,j})|\geq 1/7 - 27\cdot 4^{j_1}$, for some $P'\subseteq P$.
Since by $1$-unconditionality of the standard basis of $\mT$ we have $\|\sum_{(i,j)\in P'} a_iz_{i,j}\| \leq \|z\|$, we can assume $P=P'$.
Moreover, with the abuse of the notation we shall write $y_p, y_i$ also for the restrictions $y'_p$ and their sums.

For every $i$ we define $t_i\in\T$ to be the covering index for $y_i$.

Now we will show that there exists $f\in W$ such that $\supp f \subseteq \supp z$ and
\[ 
    |e^*_\gamma(y)| \leq 4n_{j_1}^{-1} + 96f(z). 
\]

Let $\T'$ be the smallest subtree of $\T$ containing the root
and all $t_i$'s and define for all $t\in\T'$ a tree 
$\T'_t = \{s \in \T'\mid s \succeq t\}$ and
\[
    A_t = \{ i \in I \mid \rng(e^*_{\eta_t}P_{I_t}) \cap \rng y_i \neq \emptyset\}, \ 
    y_t = \sum_{i\in A_t} a_iy_i,\ z_t=\sum_{i\in A_t} a_iz_i.
\]

\begin{claim}
For all $t \in \T'$ there is $f_t\in W$ such that $\supp f_t\subseteq \supp z_t$ and
\[
    |e^*_{\eta_t}P_{I_t}(y_t)| \leq |\bdp|(e^*_{\eta_t}, \T'_t)(y_t) + 96f_t(z_t).
\]
\end{claim}

\begin{proof}[Proof of the claim]
Proof by induction on $\T'$ starting from the terminal nodes of $\T'$.
The argument for the base of the induction is a simpler version of the argument
for the inductive step (see Case 2 below), hence we show only the latter.

Let $t\in \T'$ be a non-terminal node.
Consider the evaluation analysis of $e^*_{\eta_t}$
\[
    e^*_{\eta_t}=\sum_{s\in S_t} d^*_{\beta_s} + 
    \frac{1}{m_t}\sum_{s\in S_t} \varepsilon_s e^*_{\eta_s} P_{I_s}.
\]

\noindent Case 1. $t \neq t_i$ for all $i$.
Let $f_t = \frac{1}{m_t}\sum_{s\in S_t \cap \T'} f_s$.
Then using the inductive assumption we obtain
\begin{align*}
    |e^*_{\eta_t}P_{I_t}(y_t)| & \leq \sum_{s\in S_t} |d^*_{\beta_s}P_{I_t}(y_t)|
    + \frac{1}{m_t}\sum_{s\in S_t \cap \T'} |e^*_{\eta_s} P_{I_s} (y_s)| \\
    & \leq \sum_{s\in S_t} |d^*_{\beta_s}P_{I_t}(y_t)| + \frac{1}{m_t}\sum_{s\in S_t \cap \T'}|\bdp|(e^*_{\eta_s}, \T'_s)(y_t) + 
    \frac{1}{m_t}\sum_{s\in S_t \cap \T'} 96f_s(z_s) \\
    & \leq |\bdp|(e^*_{\eta_t}, \T'_t)(y_t) + 96f_t(z_t)
\end{align*}

\noindent Case 2. $t=t_{i_0}$ for some $i_0$.
We define $E_t=\{i\in I\mid t =t_i\}$ and estimate
\begin{align*}
    |e^*_{\eta_t}P_{I_t}(y_t)| & \leq \sum_{s\in S_t} |d^*_{\beta_s}P_{I_t}(y_t)|
    + \frac{1}{m_t}\sum_{s\in S_t \cap \T'} |e^*_{\eta_s} P_{I_s} (y_s)| + \frac{1}{m_t}\sum_{s\in S_t\setminus \T'} |e^*_{\eta_s} P_{I_s} (\sum_{i\in E_t} a_iy_i)|\\
    & \leq |\bdp|(e^*_{\eta_t}, \T'_t)(y_t) + 
    \frac{1}{m_t}\sum_{s\in S_t \cap \T'}96f_s(z_s) + \frac{1}{m_t}\sum_{s\in S_t\setminus  \T'} |e^*_{\eta_s} P_{I_s} (\sum_{i\in E_t} a_iy_i)|\\
    & \leq \dots
\end{align*}

Fix $i\in E_t$. 
Define $S_i=\{s\in S_t\setminus \T'\mid \rng e^*_{\eta_s}P_{I_s} \cap \rng y_i \neq \emptyset\}$ 
and $S_{i,j}=\{s\in S_t\setminus \T'\mid \rng e^*_{\eta_s}P_{I_s} \cap \rng y_{i,j} \neq \emptyset\}$. 
%
Let $A_i=\{s\in S_i\mid \exists j\colon \rng y_{i,j} \subset \rng (e^*_{\eta_s}P_{I_s}) \}$ 
and $B_i=S_i\setminus A_i$.
For every $s\in A$ let $J_s =\{j\mid \rng y_{i,j} \subset \rng (e^*_{\eta_{s}}P_{I_s})\}$.
Let $B_j = \{s\in B\mid \rng (e^*_{\eta_s}P_{I_{s}}) \subseteq \rng y_{i,j}\}$ 
for $j$ in $J_i=\{j\mid \exists s\in S_i\colon \rng (e^*_{\eta_s}P_{I_s}) \subseteq \rng y_{i,j}\}$.
Using the above notation we have the following splitting
\[
    \sum_{s\in S_i} |e^*_{\eta_s} P_{I_s} (\sum_{j=1}^{n_{k_i}} y_{i,j})| = \sum_{s\in A_i} |e^*_{\eta_s} P_{I_s} (\sum_{j\in J_s} y_{i,j})| + \sum_{j\in J_i} \sum_{s\in B_j} |e^*_{\eta_s} P_{I_s} (y_{i,j})|.
\]
Concerning the first part, by Lemma \ref{norm-sum} for every $s\in A$ there exists a functional $f_s\in W$ with $\supp f_s \subset \supp \sum_{j\in J_s} z_{i,j}$ and 
\[
    |e^*_{\eta_s} P_{I_s} (\sum_{j\in J_s} y_{i,j})| \leq 96 f_s(\sum_{j\in J_s} z_{i,j}).
\]
Concerning the second part, by Lemma \ref{norm-proj}  (see Remark \ref{aveproj})  for every $j\in J_i$ the following inequality holds:
\[
    \sum_{s\in B_j} |e^*_{\eta_s} P_{I_s} (y_{i,j})| \leq  9\left(1 + \frac{2\#B_j}{n_{k_{i,j}}}\right) \leq 18 =
    18f_j(z_{i,j}),
\]
for the functional $f_j\in W$ dual to $z_{i,j}$ as $\#B_j \leq |S_t| \leq n_{k_{i}} < n_{k_{i,j}}$.

Finally, we have 
\begin{align*}
    \sum_{s\in S_i} |e^*_{\eta_s} P_{I_s} (y_{i})| &= \sum_{s\in A_i} |e^*_{\eta_s} P_{I_s} (\frac{c_im_{k_i}}{n_{k_i}}\sum_{j\in J_s} y_{i,j})| + \sum_{j\in J_i} \sum_{s\in B_j} |e^*_{\eta_s} P_{I_s} (\frac{c_im_{k_i}}{n_{k_i}}y_{i,j})| \\
     & \leq 96\sum_{s\in A_i}f_s(\frac{m_{k_i}}{n_{k_i}}\sum_{j\in J_s} z_{i,j}) + 18\sum_{j\in J_i} f_{s_j}(\frac{m_{k_i}}{n_{k_i}} z_{i,j}).
\end{align*}

We define 
\[
    f_t=\frac{1}{m_t}\left(\sum_{s\in S_t\cap\T'}f_s + 
    \sum_{i\in E_t}(\sum_{s\in A_i} f_s + \sum_{j\in J_i} f_j)\right).
\]
The functional $f_t$ is in $W$ since all $f_s$'s and $f_j$'s have pairwise disjoint ranges and we have
$\sum_{i\in E_t}(\# A_i + \# J_i) \leq \#(S_t\setminus \T')$. 

Going back to the main estimation we obtain
\[
\dots \leq |\bdp|(e^*_{\eta_t}, \T_t')(y_t) + 96f_t(z_t),
\]
which finishes the inductive construction.
\end{proof} 

Going back to the main proof we have that $y_\emptyset=y,\ z_\emptyset = z,\ \eta_\emptyset=\gamma$, and
$|e^*_{\gamma}(y)| \leq |\bdp|(e^*_\gamma, \T')(y) + 96\|z\|$.
For every $i\in I$ we have $|a_i| \leq \|P_{\rng y_i}(y)\| \leq 4$,
and thus by Lemma \ref{est-bd} we have $|\bdp|(e^*_\gamma, \T')(y)\leq 4n_{j_1}^{-1}$.

The assumption on $\gamma$ yields
\[
    \frac{1}{7} - 27 \cdot 4^{-j_1} \leq \frac{4}{n_{j_1}} + 96 \|z\|,
\]
hence, as $j_1\geq 5$, we get
\[
    \|y\| \leq 8\cdot 96\|z\| \leq 2^{10} \|z\|.
\]
\end{proof}

\end{proof}


\begin{thebibliography}{aaa}
 \bibitem{AFHO} S.A. Argyos, D. Freeman, R. Haydon, E. Odell, Th. Raikoftsalis, Th. Schlumprecht, D. Zisimopoulou, \textit{Embedding uniformly convex spaces into spaces with very few operators}, J. Funct. Anal. 262 (206), no. 3,  825--849.

\bibitem{AGM} S.A. Argyros, I. Gasparis, P. Motakis, \textit{On the structure of separable $\mathscr{L}_{\infty}$-spaces}, Mathematika 62 (2016), no. 3, 685--700. 

\bibitem{AH} S.A. Argyros, R. Haydon, \textit{A hereditarily indecomposable $\mathscr{L}_{\infty}$-space that solves the scalar-plus-compact problem}, Acta Math. 206 (2011), no. 1, 1--54.

\bibitem{AD} S.A. Argyros, I. Deliyanni, \textit{Examples of asymptotic $\ell_1$ Banach spaces}, Trans. Amer. Math. 349 (1997), no. 3, 973-995.

\bibitem{AM} S.A. Argyros, P. Motakis, \textit{The scalar-plus-compact property in spaces without reflexive subspaces}, http://arxiv.org/abs/1608.01962.

\bibitem{AT} S.A. Argyros, S. Todorcevic \textit{Ramsey methods in analysis. Advanced Courses in Mathematics}, CRM Barcelona. Birkh\"auser Verlag, Basel, 2005. viii+257 pp.

\bibitem{BD} J. Bourgain, F. Delbaen, \textit{A class of special $\mathscr{L}_{\infty}$-spaces}, Acta Math. 145 (1980), no. 3-4, 155--176.

\bibitem{FOS} D. Freeman, E. Odell and Th. Schlumprecht, \textit{The universality of $\ell_1$ as a dual space}, Math. Annalen 351 (2011), no. 1,   149--186.

\bibitem{GPZ} I. Gasparis, M.K. Papadiamantis, D.Z. Zisimopoulou, \textit{More $\ell_{r}$ saturated $\mathscr{L}_{\infty}$ spaces}, Serdica Math. J. 36 (2010), no. 2, 149--170.

\bibitem{H} R. Haydon, \textit{Subspaces of the Bourgain-Delbaen space}, Studia Math. 139 (2000), no. 3, 275--293.

\bibitem{S} Th. Schlumprecht, \textit{An arbitrarily distortable Banach space}, Israel J. Math. 76, 1991, 81-95.

\bibitem{T} B. S. Tsirelson, \textit{Not every Banach space contains $\ell_p$ or $c_0$}, Funct. Anal. Appl. 8, 1974, 138-141,

\bibitem{MPS} A. Manoussakis, A. Pelczar-Barwacz, M. \'Swi\c{e}tek, \textit{An unconditionally saturated Banach space with the scalar-plus-compact property}, J. Funct. Anal. vol. 272 (12) (2017), 4944-4983

 \end{thebibliography}
\end{document}